\def\eps{\varepsilon}
\def\RR{\mathbb{R}}
\newtheorem{thm}{Theorem}[section]
\newtheorem{cor}[thm]{corollary}
\theoremstyle{definition}
\newtheorem{lemma}[thm]{Lemma}
\newtheorem{definition}[thm]{definition}
\numberwithin{equation}{section}
\newcommand{\M}{{\cal M}}
\newcommand{\eq}{\eqref}
\newcommand{\beq}{\begin{equation}}
\newcommand{\eeq}{\end{equation}}
\title{On a Reaction--Cross--Diffusion System \\ Modelling the Growth of Glioblastoma}
\author{Martin Burger\thanks{Institut f\"ur Numerische und Angewandte Mathematik, Westf\"alische Wilhelms-Universit\"at (WWU) M\"unster. Einsteinstr. 62, D 48149 M\"unster, Germany. e-mail: martin.burger@wwu.de,\,patricia\_friele@gmx.de,\,jan.pietschmann@wwu.de} \and Patricia Friele$^*$ \and Jan-Frederik Pietschmann$^*$} 
\begin{document}

\maketitle
{\em Preprint: \today}
\begin{abstract}
\noindent We investigate a recently proposed cross-diffusion system modelling the growth of gliobastoma taking into account size exclusion both in the migration and proliferation process. In addition to degenerate nonlinear cross-diffusion the model includes reaction terms as in the Fisher-Kolmogorov equation and linear ones modelling transition between states of proliferation and migration.
We discuss the mathematical structure of the system and provide a complete existence analysis in spatial dimension one. The proof is based on exploiting partial entropy dissipation techniques and fully implicit time discretisations. In order to prove existence of the latter appropriate variational and fixed-point techniques are used, together with suitable a-priori estimates. Moreover, we review the existence of travelling waves and their relation to potential growth or decay of glioblastoma. Finally we provide extensive numerical studies in one and two spatial dimensions, including the effect of anisotropic diffusions as found in neural tissues. 
\end{abstract}

\section{Introduction}
The growth of tumours or more general the interaction of migration and proliferation, is a classical topic of mathematical modelling in cell biology. The standard route is taken via the Fisher-Kolmogorov or Fisher-KPP equation 
\begin{equation}\label{eq:FisherKPP}
\partial_t \rho = D \Delta \rho + \rho(1-\rho) 
\end{equation}
describing the evolution of the (relative) cell density $\rho=\rho(x,t)$. Here $\rho=1$ describes the maximal packing, at this point there is no further proliferation, resulting in the logistic reaction term on the right-hand side. There is a vast literature on the Fisher-Kolmogorov equation and its dynamical behaviour, in particular it is nowadays the standard example of a reaction-diffusion equation exhibiting travelling wave solutions. It can be viewed from stochastic or deterministic perspectives (cf. e.g. \cite{fisher1937wave,kolmogorov1937etude,bramson1983convergence,champneys1995algebra,freidlin1995wave,sherratt1998transition,kyprianou2004travelling}). 

While equation \eqref{eq:FisherKPP} is also used to model the growth behaviour of the brain tumours called glioblastoma, cf. \cite{belmonte2014effective,konukoglu2010extrapolating}, recently Gerlee et al. introduced a slightly refined model, cf. \cite{Gerlee2012,gerlee2016travelling}. Based on experimental evidence, they assume the existence of two distinct states that the cells in the tumour: Migration or proliferation. It is found that cells stop before they enter a cell division state, so there are two well separated states of cells. It is hence natural to derive a model for the individual densities of proliferating cells (denoted by $p$) and migrating cells (denoted by $m$) with a certain transition between these states. The starting point, cf. \cite{Gerlee2012}, is an individual-based model on an equally spaced lattice, where cells occupy one lattice site and can move to their neighbours if not occupied. Moreover, proliferation is carried out only if one of the neighbouring cells is unoccupied, which models the need for appropriate space for the two daughter cells. In addition, cells can randomly change their state between migration and proliferation with certain rates and also die at a certain rate. When carrying out a second order expansion in the lattice site this yields a reaction-diffusion system for $p$ and $m$ (with $\rho=p+m$ their sum), posed on a domain $\Omega \subset \RR^n$, $n=1,2,3$:
\begin{align}
\partial_t p&=D_{\alpha}(1-\rho)\Delta p + R_p(p,m),\label{eq:pnd}\\
\partial_t m &= D_{\nu}\left((1-\rho)\Delta m+m\Delta \rho\right) + R_m(p,m). \label{eq:mnd}
\end{align}
Here the reaction terms, which describe the transition between migration and proliferation, cell death, and birth due to proliferation are given by
\begin{align}\label{eq:reacp}
 R_p(p,m) &= -(\mu+q_m)p+q_pm + \alpha p(1-\rho),\\ \label{eq:reacm}
 R_m(p,m) &= -(\mu+q_p)m + q_mp,
\end{align}
where $q_p$ and $q_m$ denote the transition rate from either migrating to proliferating cells or vice versa. The constant $\mu > 0$ models the apoptosis (cell death) rate while $\alpha$ denotes the rate of cell division of species $p$. With $\rho$ being the density of all cells, the term $(1-\rho)$ reflect the fact that only a finite number of cells can occupy a given volume. We consider this system supplemented with no flux boundary conditions given by
\begin{align}\label{eq:bcs}
 \partial_n p = 0\text{ and } (1-p)\partial_n m + m \partial_n p = 0\text{ on } \partial\Omega.
\end{align}

Our aim in this paper is to provide a further mathematical understanding of the reaction-cross-diffusion system  \eqref{eq:pnd}, \eqref{eq:mnd} beyond the preliminary investigation in \cite{Gerlee2012}, based on calculating stationary solutions, linear stability, travelling waves and numerical solutions in spatial dimension one. We will give a rigorous existence proof for the system in the one-dimensional case, which needs a combination of entropy-type techniques for the nonlinear cross-diffusion parts with suitable techniques previously used for reaction-diffusion equations. We also revisit the computation of stationary solutions and their stability, since we find slightly different signs compared to the ones given in \cite{Gerlee2012}. In particular a necessary condition on the growth rate relative to the apoptosis- and the transition rates in order to obtain a non-trivial solution in addition to the zero density state is observed. This is of course related to a natural biological question of growth or dissolution of the tumour and becomes even more relevant when further treatment strategies are to be incorporated into the modelling. In addition, the travelling wave solutions give an idea of the type and speed of tumour expansion.

In order to obtain a more realistic scenario, we also provide a numerical investigation of the dynamics in two spatial dimensions. There we can also observe how a too small defect in a tumour (due to medical treatment) will be filled with cancerous tissue and study the effects of anisotropic spatially dependent diffusivity, which is naturally found in neural tissue (cf. \cite{Painter2013,engwer2015glioma}). 

The remainder of the paper is organised as follows: In section 2 we discuss some preliminary notations and definition and state the main theoretical result, whose proof is detailed in section 3. section 4 is devoted to a discussion of asymptotics with respect to time and parameters. section 5 provides a numerical study including multidimensional examples, finally an outlook to open problems for further research is given in section 6.

\section{Preliminaries and main results}
In the following we will set $\bar q_m = q_m + \mu$ and $\bar q_p = q_p + \mu$ to shorten the notation. Since our main results hold in one spatial dimension only, we explicitly state \eqref{eq:pnd}--\eqref{eq:mnd} for this case
\begin{align}
\partial_t p&=D_{\alpha}(1-\rho)\partial_{xx}p + R_p(p,m),\label{eq:p}\\
\partial_t m &= D_{\nu}\left((1-\rho)\partial_{xx}m + m\partial_{xx}\rho\right) + R_m(p,m), \label{eq:m}
\end{align}
and define the set
\begin{equation}\label{eq:M}
\mathcal{M}=\lbrace (p,m)\in L^2(\Omega)\times L^2(\Omega)\; |\; p,m\geq 0,\,p+m\leq 1\rbrace.
\end{equation}
Our definition of solutions is given as follows:
\begin{definition}[Strong solution]\label{def:weak}
Two functions $p,m:(0,T)\rightarrow \mathcal{M}\cap H^1(\Omega)$ are strong solutions to \eqref{eq:p}--\eqref{eq:m},  if
\begin{align*}
 \sqrt{(1-\rho )}\partial_{xx} p , \;\sqrt{(1-\rho )}\partial_{xx} m ,\;\sqrt{m }\partial_{xx} \rho, \partial_t p, \partial_t m \in L^\infty((0,T);L^2(\Omega)),
\end{align*}
and almost everywhere in $\Omega$ we have
\begin{align} 
(1-\rho) \partial_t p&=  D_\alpha  (1-\rho)^2 \partial_{xx}p 
+ (1-\rho)R_p(p,m) \label{eq:pweak}\\
 (1-\rho)\partial_t m &= D_\nu  (1-\rho)((1-\rho)\partial_{xx}m-m\partial_{xx}\rho)+ (1-\rho)R_m(p,m)   \label{eq:mweak}
\end{align}
\end{definition}
Note that compared to the usual strong form we multiplied the equations by $(1-\rho)$. This will be crucial for our existence results later on and is motivated by the fact that for $\rho=1$ the maximal packing limit is reached, hence the evolution is not well-defined. Similar reasoning has been carried out for weak solutions of cross-diffusion equations with size exclusion (cf. \cite{bruna2017cross}).

The main result of this paper is the following theorem
\begin{thm}[Existence of strong solutions]\label{thm:main}
For given initial data $(p_0,m_0) \in \mathcal{M}\cap H^1(\Omega)$ there exists a strong solution $(p,m)$ to \eqref{eq:p}--\eqref{eq:m}, in the sense of definition \ref{def:weak} with
\begin{align*}
p,\,m \in L^2(0,T;H^1(\Omega)), \text{ and }\; \partial_t p,\, \partial_t m \in L^2(0,T;L^2(\Omega)),
\end{align*}
Furthermore, the initial data is attained as follows
$$
\lim_{t\rightarrow0}\left\Vert p(\cdot,t)-p_0\right\Vert_{L^2(\Omega)}=\lim_{t\rightarrow0}\left\Vert m(\cdot,t)-m_0\right\Vert_{L^2(\Omega)}=0.
$$
\end{thm}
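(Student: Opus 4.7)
The plan is to combine a fully implicit time discretisation with a vanishing viscosity regularisation, exploiting a partial entropy structure to derive the a priori bounds required by Definition \ref{def:weak}. First I would add a small artificial diffusion $\eps\p_{xx}p$, $\eps\p_{xx}m$ to the right-hand sides so that the principal part becomes uniformly parabolic, and discretise in time by backward Euler with step $\tau>0$. At each time level $k$, given $(p^k,m^k)\in\M$, the task reduces to solving a stationary nonlinear coupled system for $(p^{k+1},m^{k+1})$.

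For existence at a single time step I would use Schauder's fixed-point theorem. Define a map $\Phi:(\ti p,\ti m)\mapsto(p,m)$ that assigns to each candidate the solution of the linearised elliptic system obtained by freezing $\ti\rho=\ti p+\ti m$ in the cross-diffusion coefficients; this linear problem can be solved by Lax--Milgram in a weighted Sobolev space involving $\sqrt{1-\ti\rho}$, coercivity being ensured by the $\eps$-regularisation. The more delicate part is to verify that $\Phi$ preserves the box constraints $p,m\ge 0$ and $p+m\le 1$, since scalar maximum principles do not directly apply to cross-diffusion systems; the $(1-\rho)$-multiplied formulation \eqref{eq:pweak}--\eqref{eq:mweak} combined with a truncation of the reaction terms and the entropy below is what should make this possible.

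For uniform bounds in $\tau$ and $\eps$ I would employ the partial entropy
\[
\energy(p,m)=\int_\Omega\bigl[p\log p + m\log m + (1-\rho)\log(1-\rho)\bigr]\,dx,
\]
testing the (semi-discrete) equations with its variational derivatives $\log p-\log(1-\rho)$ and $\log m-\log(1-\rho)$ and summing. A direct computation shows that the $m$-equation is in gradient-flow form for $\energy$, whereas the $p$-equation produces an extra term $-p\,\p_{xx}\rho$ beyond the gradient-flow structure; this is the reason dissipation is only partial. The extra term is of lower order relative to the dissipated quantities and can be absorbed through Young's inequality, $0\le p,m,\rho\le 1$, and Gronwall, yielding $L^2$-in-time control on $\sqrt{1-\rho}\,\p_x p$ and $\sqrt{1-\rho}\,\p_x m$, and hence $p,m\in L^2(0,T;H^1)$. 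To reach the $L^\infty(0,T;L^2)$ bounds on $\sqrt{1-\rho}\,\p_{xx}p$, $\sqrt{1-\rho}\,\p_{xx}m$ and $\sqrt{m}\,\p_{xx}\rho$ demanded by Definition \ref{def:weak}, I would further test the multiplied equations with $\p_t p$ and $\p_t m$, exploiting one-dimensional Sobolev embeddings $H^1(\Omega)\embedded L^\infty(\Omega)$ to control products.

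Passage to the limit $\tau\to 0$ followed by $\eps\to 0$ is then a standard Aubin--Lions argument: the $L^2(0,T;H^1)$ bounds together with $\p_t p,\p_t m\in L^2(0,T;L^2)$ give strong convergence of $p,m$ in $L^2(0,T;L^2)$, which is enough to pass to the limit in the factors $(1-\rho)$ and in $R_p,R_m$, while the degenerate second-order terms pass to the limit in their $(1-\rho)$-multiplied form. Attainment of the initial data in $L^2$ follows from the continuity embedding $H^1(0,T;L^2)\cap L^2(0,T;H^1)\embedded C([0,T];L^2)$. I expect the hardest step to be the combined problem of propagating the box constraint $p+m\le 1$ through the fixed-point/time-discrete scheme while simultaneously closing the compactness argument in the degenerate limit, where uniform parabolicity is lost precisely on the (possibly non-empty) contact set $\{\rho=1\}$; this is exactly the reason for introducing the $(1-\rho)$-multiplied solution concept of Definition \ref{def:weak}.
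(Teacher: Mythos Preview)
Your overall architecture (implicit Euler, Schauder fixed point, Aubin--Lions) matches the paper, but the heart of the proof --- the a priori estimates --- is where your proposal and the paper diverge, and where your plan has a genuine gap.

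You propose to obtain the spatial bounds by entropy dissipation, testing with $\log p-\log(1-\rho)$ and $\log m-\log(1-\rho)$. The paper explicitly notes that this route fails to deliver what is needed: for size-exclusion systems the entropy dissipation typically yields only degenerate quantities like $\sqrt{(1-\rho)/p}\,\partial_x p$, not full $H^1$ control, and here only the $m$-equation is in gradient-flow form. The ``extra term'' coming from the $p$-equation is not a harmless lower-order perturbation: after integration by parts it contains $\partial_x\rho\,\partial_x p\,\log p$, with $\log p$ unbounded and the available dissipation $\int\frac{1-\rho}{p}(\partial_x p)^2$ degenerating exactly where $\rho\to 1$. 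There is no obvious Young/Gronwall absorption. Your second step, testing with $\partial_t p$ and $\partial_t m$, would at best produce $L^\infty_t L^2_x$ control of weighted \emph{first} derivatives and $L^2_t L^2_x$ control of time derivatives; it does not give the bounds on $\sqrt{1-\rho}\,\partial_{xx}p$, $\sqrt{1-\rho}\,\partial_{xx}m$, $\sqrt{m}\,\partial_{xx}\rho$ required by Definition~\ref{def:weak}.

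The paper's key idea, which your proposal is missing, is to test the discrete equations with carefully chosen linear combinations of \emph{second} derivatives: $-2\frac{D_\nu}{D_\alpha}\partial_{xx}p$ and $-\frac{D_\nu}{D_\alpha}\partial_{xx}m$ against the $p$-equation, and $-\partial_{xx}\rho$ against the $m$-equation. The cross terms then organise into a positive-definite quadratic form (encoded by a $2\times 2$ matrix $M$), yielding directly the $L^\infty_t L^2_x$ bounds on $\partial_x p$, $\partial_x m$ \emph{and} the required degenerate second-order quantities, with reaction terms controlled by Gronwall. This structural trick, specific to the asymmetry of the system, replaces the entropy estimate entirely. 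A secondary difference: the paper enforces the box constraint $p+m\le 1$ not by truncation but by writing the $m$-equation in the dual variable $u=\log\frac{m}{1-\rho}$, so that $m(u)=(1-p)e^u/(1+e^u)$ automatically satisfies $p+m\le 1$ whenever $0\le p\le 1$; the latter is ensured by a logarithmic barrier in a variational formulation of the linearised $p$-equation.
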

As usual the main difficulty in proving the theorem is the derivation of a priori estimates. Since we are dealing with a system, maximum principles are not available. Moreover, as the diffusion is degenerate, also energy estimates obtained by testing the equations with their respective solution do not work. In similar systems of cross-diffusion equations, this can be overcome by exploiting a formal gradient flow structure of the system. Unfortunately, in our case only the second equation \eqref{eq:m} can we written in such a way so that only a partial entropy structure is available. However, it will turn out that this asymmetric structure of our problem allows for a different approach, namely testing the equations with linear combinations of second derivatives of their solutions which eventually yields the needed estimates.
\subsection{Partial gradient flow structure}
As mentioned above, the system only features a partial entropy structure. Rewriting equation \eqref{eq:m} in divergence form, we obtain
\begin{align}\label{eq:ment}\notag
\partial_t m &= D_\nu((1-\rho)\partial_{xx}m+m\partial_{xx}\rho)+R_m(p,m)\\ 
&=D_\nu\partial_x((1-\rho)\partial_xm+m\partial_x \rho)+R_m(p,m)\\ \notag
&=D_\nu\partial_x \left( m(1-\rho)\partial_x(\partial_m\mathcal{E}_p(m,p))\right)+R_m(p,m).
\end{align}
Here, the term $m(1-\rho)$ is a nonlinear mobility and $\mathcal{E}_p$ denotes the entropy functional
\begin{align}
 \mathcal{E}_{p}(m) = \int_{\Omega} e_p(m)\;dx\;\;\text{ with }\;\;e_p(m) = m\log m + (1-\rho)\log (1-\rho),
\end{align}
where $e_p(m)$ is the corresponding entropy density. This structure also allows us to define an entropy (or dual) variable as 
\begin{align}\label{eq:defent}
 u := e_p'(m) = \partial_m\mathcal{E}_p(m) = \log\left(\frac{m}{1-\rho}\right).
\end{align}
Inverting this relation we obtain
\begin{align}\label{eq:entinv}
 m(u) = (e_p')^*(u) = \frac{e^u(1-p)}{1+e^u},
\end{align}
where $(e_p')^*$ denotes the convex conjugate. Note that as long as $0\le p \le 1$ this relation implies $0\le p + m(u) \le 1$, a fact which was first used in \cite{Burger2010} and is sometimes called \emph{boundedness by entropy principle}, cf. \cite{Juengel2015}. It will serve as a valuable tool to prove additional a priori bounds later on.

\section{Proof of Theorem \ref{thm:main}}
The proof of our main theorem consists of several main steps: First, we approximate system \eqref{eq:p}--\eqref{eq:m} by means of an implicit time discretisation, additional regularisation and the use of dual variables. The resulting system is still nonlinear and existence of iterates is shown using Schauder's fixed point theorem. Next, we derive a priori estimates which finally allow us to pass to the limit as the last step.
\subsection{Approximation}
% The approximation is done using a discretisation in time by means of an implicit Euler scheme. To this end w
Given $N\in \mathbb{N}$, we divide the interval $(0,T)$ into subintervals $(k\tau,(k+1)\tau]$, $k=0,\ldots,N-1$, where $\tau=\frac{T}{N}$ is the time step size. Then, for given functions $(p_k(x),m_k(x))\in \mathcal{M}$ we perform an implicit Euler discretisation of \eqref{eq:p} and \eqref{eq:ment} which yields
\begin{align}\label{eq:ptaupre}
\frac{p_{k+1}-p_k}{\tau} &= D_{\alpha}(1-p_{k+1}-m_{k+1})\partial_{xx}p_{k+1}+ R_p(p_{k+1},m_{k+1}),
% \alpha p_{k+1}(1-p_{k+1}-m_{k+1}) -\bar q_mp_{k+1}+q_p m_{k+1}, 
\\ \label{eq:mtaupre}
\frac{m(u)-m_k}{\tau} &= D_\nu\partial_x(m_k(1-p_{k+1}-m_k)\partial_xu)+R_m(p_{k+1},m(u)),\\
m_{k+1} &= m(u).\label{eq:utaupre}
\end{align}
As a second step, we carry out and additional regularisation using the time step size $\tau$ as a parameter, see \cite{Lane2015,Berendsen2017} for a similar approach. We divide \eqref{eq:ptaupre} by the factor $1-p_{k+1}-m_{k+1}$ and introduce the additional regularisation in the denominator of the last term. Equation \eqref{eq:utaupre} is modified by adding the term $\tau\partial_{xx}u$. 
Inserting the definition of the reaction terms \eqref{eq:reacp}--\eqref{eq:reacm}, we arrive at
\begin{align} \label{eq:ptau}
0&=D_\alpha\partial_{xx}p_{k+1}+\alpha p_{k+1}+\bar q_m \\
&+\frac{\bar q_m(1-\tilde m_{k+1})+c_1m_{k+1}}{1-p_{k+1}-m_{k+1}}
+\frac{(q_p+c_1)m_{k+1} +1/\tau(p_k-p_{k+1})}{1-p_k-m_{k+1}+\tau}\label{eq:mtau}\\
0&= D_\nu\partial_x\left((\tilde m(1-\tilde p-\tilde m)+\tau)\partial_x u\right)-\bar q_pm(u)+q_mp_{k+1}\\ \notag
&-\frac{1}{\tau}(m(u)-m_k),\\
m_{k+1} &= m(u),\label{eq:utau}
\end{align}
where $c_1$ is a positive constant to be determined later. The resulting system \eqref{eq:ptau}--\eqref{eq:utau} is still nonlinear and the main result of this section is the following theorem:
%%%%%%%
% THM: Existence iterates
%%%%%%%
\begin{thm}\label{thm:exiterates} %[Existence of time-discrete iterates]
For any $(p_k,m_k)\in\mathcal{M}$, there exists a tuple
\begin{align*}
(p_{k+1},m_{k+1})\in\mathcal{M} \cap H^1(\Omega)
\end{align*}
being a weak solution to \eqref{eq:ptau}--\eqref{eq:utau}.
\end{thm}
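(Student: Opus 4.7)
The plan is to prove existence by Schauder's fixed-point theorem applied to a self-map $\Phi$ of a suitable closed convex subset of $\mathcal{M}$ in $C(\overline\Omega)^2$, whose fixed points are exactly the weak solutions of \eqref{eq:ptau}--\eqref{eq:utau}. Given $(\tilde p,\tilde m)\in\mathcal{M}$, I would freeze the mobility in \eqref{eq:mtau} and the quantity $\tilde m_{k+1}$ appearing in \eqref{eq:ptau} at these data, then (i)~solve the regularised elliptic equation \eqref{eq:mtau} for the dual variable $u$ and set $m_{k+1}:=m(u)$, and (ii)~with this $m_{k+1}$ in hand, solve the semilinear elliptic equation \eqref{eq:ptau} for $p_{k+1}$. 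In one space dimension the compact embedding $H^1(\Omega)\embedded C(\overline\Omega)$ will supply the compactness needed for Schauder, while continuity of $\Phi$ will follow from continuous dependence of each decoupled subproblem on its frozen coefficients.

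For the $u$-step the regularisation $\tau\partial_{xx} u$ makes the effective diffusion coefficient $\tilde m(1-\tilde p-\tilde m)+\tau$ uniformly bounded below by $\tau>0$, so the problem is uniformly elliptic with homogeneous Neumann data. Since $u\mapsto m(u)=e^u(1-\tilde p)/(1+e^u)$ is monotone nondecreasing and bounded, the overall operator is monotone and coercive, and a unique $u\in H^1(\Omega)$ can be obtained either by Browder--Minty or by minimising a strictly convex functional that combines the weighted Dirichlet integral with a primitive of the reaction terms. The boundedness-by-entropy identity \eqref{eq:entinv} then automatically yields $0\le m_{k+1}\le 1-\tilde p$ pointwise, and a weighted $H^1$ estimate on $u$, combined with the Lipschitz dependence of $m(\cdot)$, transfers to an $H^1$ bound on $m_{k+1}$.

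For the $p$-step, \eqref{eq:ptau} with $m_{k+1}$ and $\tilde m_{k+1}$ frozen is a semilinear second-order ODE for $p_{k+1}$ with Neumann data. Its right-hand side decomposes into an affine part in $p_{k+1}$ (including the favourable term $-(p_{k+1}/\tau)/(1-p_k-m_{k+1}+\tau)$), plus the singular positive fraction $[\bar q_m(1-\tilde m_{k+1})+c_1 m_{k+1}]/(1-p_{k+1}-m_{k+1})$ whose blow-up as $p_{k+1}\uparrow 1-m_{k+1}$ repels $p_{k+1}$ from the maximal-packing threshold. The constant $c_1$ is to be chosen large enough so that, modulo the singular term, the lower-order operator is strictly monotone decreasing in $p_{k+1}$. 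I would first solve a regularised problem with the singular denominator replaced by $\max(\de,1-p_{k+1}-m_{k+1})$ by a further Leray--Schauder argument, derive uniform-in-$\de$ bounds $0\le p_{k+1}\le 1-m_{k+1}-\de'$ by constant super- and sub-solutions, and then pass to the limit $\de\downarrow 0$. Testing the resulting equation with $p_{k+1}$ yields the $H^1$ estimate.

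The principal difficulty I anticipate is securing the strict inequality $p_{k+1}+m_{k+1}<1$ uniformly in $\de$, since the singular denominator $1-p_{k+1}-m_{k+1}$ is deliberately not smoothed by $\tau$. The desired barrier is provided precisely by the first positive fraction in \eqref{eq:ptau}, whose blow-up forces any classical solution to stay strictly below $1-m_{k+1}$; the concrete implementation requires a constant supersolution chosen just below $1-\|m_{k+1}\|_{L^\infty(\Omega)}$ with a quantitative gap depending only on the data and on the $L^\infty$-bound on $m_{k+1}$ delivered by Step~2. Once both subproblems produce elements of $\mathcal{M}\cap H^1(\Omega)$ with bounds uniform on bounded subsets of $\mathcal{M}$, the composition $\Phi:(\tilde p,\tilde m)\mapsto(p_{k+1},m_{k+1})$ is a continuous and compact self-map of a closed convex subset of $C(\overline\Omega)^2\cap\mathcal{M}$, and Schauder delivers the fixed point asserted by the theorem.
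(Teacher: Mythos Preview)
Your proposal is correct in overall architecture and uses the same three pillars as the paper: Schauder's fixed-point theorem on $\mathcal{M}$, the entropy variable $u$ with the boundedness-by-entropy identity \eqref{eq:entinv} for the $m$-subproblem, and a barrier mechanism in the $p$-subproblem to enforce $p+m<1$. The differences are in the execution.

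First, the paper reverses your order: it solves the $p$-equation first (with $\tilde m$ frozen) and only then the $u$-equation (with the new $p_{k+1}$ inserted). This ordering is deliberate: once $p_{k+1}$ is in hand, the inversion $m_{k+1}=e^u(1-p_{k+1})/(1+e^u)$ automatically lands in $\{p_{k+1}+m_{k+1}\le 1\}$, so the self-mapping property of $\mathcal{F}$ into $\mathcal{M}$ is free. In your order the entropy identity only gives $m_{k+1}\le 1-\tilde p$, and you must then work to show $p_{k+1}<1-m_{k+1}$ separately---precisely the ``principal difficulty'' you flag.

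Second, for the $p$-subproblem the paper avoids your $\delta$-regularisation and sub/supersolution layer entirely. It observes that \eqref{eq:ptaulin} is the Euler--Lagrange equation of a strictly convex functional $G_{(\tilde p,\tilde m)}$ on $H^1(\Omega)$ containing the term $-\int_\Omega\bigl(\tilde m(c_1-\bar q_m)+\bar q_m\bigr)\log(1-\tilde m-p)\,dx$; with $c_1\ge\bar q_m$ this is a convex logarithmic barrier that blows up as $p\uparrow 1-\tilde m$, so the direct method yields a unique minimiser already satisfying the packing constraint, without any auxiliary limit. Continuity and compactness of the solution map then follow from coercivity and lower semicontinuity of $G$ with respect to both $p$ and the frozen data $(\tilde p,\tilde m)$.

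Your route would also close, but at the cost of the extra $\delta\to 0$ passage and the quantitative supersolution bound you describe; the paper's variational formulation buys all of this in one stroke. The treatment of the $u$-subproblem is essentially the same in both approaches (the paper likewise minimises a strictly convex functional $H_{(p,\tilde m)}$ containing $e_p^*(u)$), and the compactness for Schauder comes in both cases from $H^1(\Omega)\hookrightarrow L^2(\Omega)$ (the paper works in the $L^2$ topology rather than $C(\overline\Omega)$, but either is fine in one dimension).
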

The proof is based on Schauder's fixed point theorem. We start by linearising equations \eqref{eq:ptau}--\eqref{eq:utau} as follows: Given $(\tilde p,\tilde m) \in \mathcal{M}$, we have
\begin{align} \label{eq:ptaulin}
0&=D_\alpha\partial_{xx}p_{k+1}+\alpha \tilde p+\bar q_m \\
&+\frac{\bar q_m(1-\tilde m)+c_1 \tilde m}{1-\tilde p-\tilde m}
+\frac{(q_p+c_1)\tilde m +1/\tau(p_k-p_{k+1})}{1-\tilde p- \tilde m+\tau}\notag\\ \label{eq:mtaulin}
0&= D_\nu\partial_x\left((m_k(1-p_{k+1}-m_k)+\tau)\partial_x u\right)-\bar q_pm(u)+q_mp_{k+1} -\frac{1}{\tau}(m(u)-m_k),\\
m_{k+1} &= m(u).\label{eq:utaulin}
\end{align}
We  define the fixed point operator $\mathcal{F}: \mathcal{M} \to \mathcal{M}$ as the operator which maps $(\tilde p, \tilde m)$ to $(p_{k+1},m_{k+1})$ being solutions of \eqref{eq:ptaulin}--\eqref{eq:utaulin}. Since the linearised system is decoupled, this operator can be decomposed as $\mathcal{F} = \mathcal{J} \circ \mathcal{H} \circ \mathcal{G}$ with 
% \rr{The fixed point operator ....}
\begin{align}
 (\tilde p,\tilde m) \stackrel{\mathcal{G}}{\longrightarrow} (p_{k+1},\tilde m) \stackrel{\mathcal{H}}{\longrightarrow} (p_{k+1},u)\stackrel{\mathcal{J}}{\longrightarrow} (p_{k+1},m_{k+1}),
\end{align}
where $\mathcal{G}:\M \to \M$, $\mathcal{H} : \M \to \overline{\mathcal{M}}$, $\mathcal{J} : \overline{\mathcal{M}} \to \M$ and with
\begin{equation}\label{eq:Mbar}
\overline{\mathcal{M}}=\lbrace (p,u)\in L^2(\Omega)\times L^2(\Omega) \;|\; 0<p<1 \rbrace.
\end{equation}
Next we will show that each of these operators is well defined and continuous and that $\mathcal{F}$ is compact. For brevity we will neglect the indices $k+1$ in the following and simply denote the new iterates by $p$ and $m$.
%%%%
% LEM: Existence and Continuity of G
%%%%
\begin{lemma}\label{lem:G} The operator $\mathcal{G}:\M \to \M$ which maps a tuple $(\tilde p, \tilde m)$ to $(p, \tilde m) \in \M$, where $p$ is the unique solution of \eqref{eq:ptaulin} is well-defined and continuous with respect to the strong topology in $L^2(\Omega)$. Furthermore, it is compact in its first component and continuous in its second.
\end{lemma}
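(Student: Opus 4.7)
The strategy is to read \eqref{eq:ptaulin} as a linear elliptic Neumann problem for the unknown $p$, treating $(\tilde p,\tilde m)$ and $p_k$ as data. Moving the $p$-dependent contribution of the last fraction to the left, the equation takes the form
\[
-D_\alpha \partial_{xx} p + c\, p \;=\; g \quad\text{in }\Omega, \qquad \partial_n p = 0 \text{ on }\partial\Omega,
\]
where $c := \frac{1/\tau}{1-\tilde p-\tilde m+\tau}$ and $g$ collects the remaining terms, all depending only on $\tilde p,\tilde m$ and $p_k$. Since $(\tilde p,\tilde m)\in\M$ forces $1-\tilde p-\tilde m\in[0,1]$, the coefficient $c$ lies between $\frac{1}{\tau(1+\tau)}$ and $\frac{1}{\tau^2}$ and is in particular bounded and uniformly positive; this is the payoff of the $\tau$-regularisation introduced in the denominator of the last term of \eqref{eq:ptau}.

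First I would establish existence and uniqueness of $p\in H^1(\Omega)$ via Lax--Milgram applied to the bilinear form $a(p,\phi)=\int_\Omega (D_\alpha\partial_x p\,\partial_x\phi + cp\phi)\,dx$, which is continuous and coercive on $H^1(\Omega)$ thanks to the lower bound on $c$. Before this step one must verify $g\in L^2(\Omega)$; the only delicate summand is $\frac{\bar q_m(1-\tilde m)+c_1\tilde m}{1-\tilde p-\tilde m}$, which is a priori singular on the maximal packing set. Here the constant $c_1$ has to be tuned so that the numerator vanishes compatibly with the denominator as $\tilde p+\tilde m\to 1$; granted this, standard one-dimensional elliptic regularity moreover yields $p\in H^2(\Omega)\embedded C(\overline\Omega)$.

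Next I would verify $(p,\tilde m)\in\M$. Non-negativity $p\ge 0$ follows by testing the weak form with $-p^-\in H^1(\Omega)$: all summands of $g$ are non-negative because $\tilde p,\tilde m\ge 0$ and $p_k\ge 0$, whence
\[
D_\alpha\int_\Omega |\partial_x p^-|^2\,dx + \int_\Omega c(p^-)^2\,dx \;\le\; 0,
\]
so $p^-=0$ a.e. The upper constraint $p+\tilde m\le 1$ is the genuinely delicate step: since $\tilde m$ is only in $L^2$, the natural test function $(p+\tilde m-1)^+$ is not admissible in the weak formulation. My plan is to approximate $\tilde m$ by $H^1$ functions $\tilde m_\varepsilon$ (say by mollification), derive the pointwise bound for the corresponding regularised problem by testing with $(p_\varepsilon+\tilde m_\varepsilon-1)^+\in H^1(\Omega)$ and exploiting the algebraic balance between $g$ and $c$ provided by the choice of $c_1$, and then pass $\varepsilon\to 0$ using the uniqueness coming from Lax--Milgram.

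Continuity and compactness then proceed in a standard fashion. If $(\tilde p_n,\tilde m_n)\to(\tilde p,\tilde m)$ strongly in $L^2$, a subsequence converges a.e., the coefficients $c_n$ and right-hand sides $g_n$ converge a.e.\ and in a dominated sense thanks to the pointwise $\M$-bounds, and passing to the limit in the weak formulation together with uniqueness yields $p_n\to p$ in $H^1(\Omega)$, hence in $L^2(\Omega)$; the second component is just the identity $\tilde m\mapsto\tilde m$ and is trivially continuous. For compactness of the first component, testing the PDE with $p$ itself gives a uniform $H^1$-bound on the image of $\M$, and the compact embedding $H^1(\Omega)\embedded\embedded L^2(\Omega)$ concludes. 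The main anticipated obstacle is the $L^\infty$-type bound $p+\tilde m\le 1$, which hinges on an appropriate choice of $c_1$ together with the mollification--limit argument sketched above.
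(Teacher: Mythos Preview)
Your Lax--Milgram approach differs substantially from the paper's and contains a genuine gap.

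The paper does not treat the problem as a linear elliptic equation. It identifies \eqref{eq:ptaulin} as the Euler--Lagrange equation of a strictly convex functional $G_{(\tilde p,\tilde m)}$ containing the barrier term
\[
-\int_\Omega\bigl(\tilde m(c_1-\bar q_m)+\bar q_m\bigr)\log(1-\tilde m-p)\,dx,
\]
whose $p$-derivative has denominator $1-\tilde m-p$, i.e.\ involving the \emph{unknown} $p$; the denominator $1-\tilde p-\tilde m$ printed in \eqref{eq:ptaulin} is a typo. Existence and uniqueness of the minimiser then follow from the direct method (coercivity in $H^1$, lower semicontinuity, strict convexity once $c_1\ge\bar q_m$), and the constraint $p+\tilde m<1$ is obtained for free: if it were violated the functional would be $+\infty$. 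Continuity of $\mathcal G$ is shown via a stability argument for minimisers, and compactness via $H^1\hookrightarrow L^2$.

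Your proposal hinges on two steps that do not go through. First, you need $g\in L^2$ and propose tuning $c_1$ so that the numerator $\bar q_m(1-\tilde m)+c_1\tilde m$ vanishes as $\tilde p+\tilde m\to 1$. But this numerator does not depend on $\tilde p$ at all; for $c_1\ge\bar q_m$ it equals $\bar q_m+(c_1-\bar q_m)\tilde m\ge\bar q_m>0$ everywhere, so no choice of $c_1$ removes the singularity. Second, your plan for the upper bound $p+\tilde m\le 1$ --- mollify $\tilde m$, test with $(p+\tilde m-1)^+$, and invoke an unspecified ``algebraic balance between $g$ and $c$'' --- is not a proof: even with $\tilde m\in H^1$ the test produces the uncontrolled cross term $D_\alpha\int\partial_x p\,\partial_x\tilde m$, and there is no cancellation in the zero-order terms that forces the positive part to vanish. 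The only mechanism in the problem that enforces $p+\tilde m\le 1$ is the blow-up of the nonlinear term $(\bar q_m(1-\tilde m)+c_1\tilde m)/(1-p-\tilde m)$ as $p+\tilde m\uparrow 1$, which is exactly what the paper's logarithmic barrier encodes. Once you recognise that this term is nonlinear in $p$, Lax--Milgram is no longer available, and the variational route becomes the natural one.
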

\begin{proof}
The proof is based on a variational argument. In fact, \eqref{eq:ptaulin} is the Euler--Lagrange equation of the functional 
\begin{subequations}
\begin{align}\label{eq:G1}
G_{(\tilde{p},\tilde{m})}(p):&=\frac{D_\alpha}{2}\int_\Omega\left(\partial_x p\right)^2dx +\frac{1}{2\tau}\int_\Omega\frac{(p-p_k)^2}{1-\tilde{p}-\tilde{m}+\tau}dx \\ \label{eq:G2}
&-\alpha\int_\Omega\tilde{p}pdx -\bar q_m\int_\Omega pdx -\int_\Omega\frac{(q_p+c_1)\tilde{m}}{1-\tilde{p}-\tilde{m}+\tau}pdx \\ \label{eq:G3}
& -\int_\Omega\left(\tilde{m}(c_1-\bar q_m)+\bar q_m\right)\log(1-\tilde{m}-p)dx.
\end{align}
\end{subequations}
We employ the direct method of calculus of variations to show the existence of a unique minimiser $p \in \M \cap H^1(\Omega)$ of $G$. To this end, we have to show coercivity, lower semincontinuity and strict convexity of $G$, cf. \cite{braides2002gamma}. Since both the second term in \eqref{eq:G1} and \eqref{eq:G3} are non-negative, they can be neglected when estimating the functional from below. Terms in \eqref{eq:G2}, being linear in $p$, can be estimated using Cauchy-Schwarz and the weighted Young inequality to obtain
\begin{align*}
G_{(\tilde{p},\tilde{m})}(p)&\geq \frac{D_\alpha}{2}|p|_{H^1(\Omega)}^2-\left(C_1\epsilon\|p\|_{L^2(\Omega)}^2+\frac{1}{\epsilon}C_2\right)\\
&\geq \left(\frac{D_\alpha}{2}-\epsilon C_1\right)\|p\|_{H^1(\Omega)}^2- \frac{1}{\eps}C_2,
\end{align*}
where we used Friedrichs inequality in the last step, $|\cdot|_{H^1(\Omega)}$ denotes the $H^1$-seminorm and where the constants $C_1$ and $C_2$ depend only on $q_m,\,q_m\,\mu,\,\alpha,\,\tau$ and $D_\alpha$. Thus choosing $\eps$ small enough such that $\frac{D_\alpha}{2}-\epsilon C_1 > 0$, we also conclude the coercivity of $G$ with respect to $H^1(\Omega)$. Lower semincontinuity of the quadratic and linear terms in \eqref{eq:G1}--\eqref{eq:G2} follows from their (strict) convexity \cite{braides2002gamma}. The same holds true for the logarithmic term which has a positive second derivative w.r.t $p$ for $c_1 \ge \bar q_m$ and is thus also convex. Therefore, there exists a minimiser which is unique due to the strict convexity of the quadratic terms which implies convexity of the whole functional. The logarithmic term ensures that $(p,\tilde m) \in \M$ still holds since otherwise, the functional would be infinite.\\
To show the continuity of the operator $\mathcal{G}$, we take arbitrary sequences $\tilde{p}_n\rightarrow \tilde{p}$ and $\tilde{m}_n\rightarrow\tilde{m}$ in $L^2(\Omega)$ and obtain $p_n$, the corresponding sequence of minimisers of $G_{(\tilde{p}_n,\tilde{m}_n)}(p_n)$. Due to the coercivity, $p_n$ is uniformly bounded in $H^1(\Omega)$ and thus admits a weakly converging subsequence $p_{n_k}\rightharpoonup p$ in $H^1(\Omega)$. To pursue, we note that $G_{(\tilde{p}_n,\tilde{m}_n)}$ is also lower semincontinuous w.r.t to $(\tilde m, \tilde p)$, which is again a consequence of convexity for \eqref{eq:G3} and the definition of $\M$ for the second term in \eqref{eq:G1}. The last term in \eqref{eq:G2} is even continuous, due to the uniform boundedness of the denominator. Thus,
\begin{equation}
G_{(\tilde{p},\tilde{m})}(p) \leq \liminf_{n\rightarrow \infty}G_{(\tilde{p}_n,\tilde{m}_n)}(p_{n_k})\leq \liminf_{n\rightarrow\infty}G_{(\tilde{p}_n,\tilde{m}_n)}(\overline{p})= G_{(\tilde{p},\tilde{m})}(\overline{p}) \quad \forall\, \overline{p}, \notag
\end{equation}
which ensures that the limit is again a minimiser. Standard Sobolev embedding and the uniqueness of minimisers then yields the convergence of the whole sequence $p_n$ strongly in $L^2$. Finally, compactness follows from the compact embedding $H^1(\Omega) \hookrightarrow L^2(\Omega)$.
\end{proof}
%%%%
% LEM: Operator H
%%%%
\begin{lemma}\label{lem:H} The operator $\mathcal{H} : \M \to \overline{\mathcal{M}}$ which maps a tuple $(p, \tilde m)$ to $(p, u)$, where $u$ is the unique solution of \eqref{eq:mtaulin} is well-defined and continuous with respect to the strong topology in $L^2(\Omega)$ and compact in its second component.
\end{lemma}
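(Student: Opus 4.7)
The plan is to proceed in direct analogy with Lemma~\ref{lem:G}, replacing the quadratic variational problem by one adapted to the entropy variable $u$. Since $\partial_u\bigl[(1-p)\log(1+e^u)\bigr] = (1-p)\,e^u/(1+e^u) = m(u)$, the Euler--Lagrange equation \eqref{eq:mtaulin} is the first-order optimality condition (with natural Neumann boundary condition) of
\begin{align*}
H_{(p,\tilde m)}(u) &= \frac{D_\nu}{2}\int_\Omega \bigl(m_k(1-p-m_k)+\tau\bigr)(\partial_x u)^2\,dx \\
&\quad + \Bigl(\bar q_p+\frac{1}{\tau}\Bigr)\int_\Omega (1-p)\log(1+e^u)\,dx - \int_\Omega \Bigl(q_m p + \frac{m_k}{\tau}\Bigr)u\,dx,
\end{align*}
in which $p$ and $\tilde m$ enter only as frozen coefficients. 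I would then apply the direct method of the calculus of variations to produce a unique minimiser $u\in H^1(\Omega)$ and recover $m_{k+1}=m(u)$ via the pointwise inverse \eqref{eq:entinv}.

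The verification of the ingredients of the direct method largely mirrors Lemma~\ref{lem:G}: weak $H^1$-lower semicontinuity and strict convexity follow from the convexity of $(\partial_x u)^2$, the strict convexity of $u\mapsto\log(1+e^u)$ (whose second derivative $e^u/(1+e^u)^2$ is positive) and the continuity of the linear term. The quadratic part yields a lower bound of the form $\frac{D_\nu\tau}{2}\,|u|_{H^1}^2$ thanks to the $\tau$-regularisation, which keeps $m_k(1-p-m_k)+\tau$ bounded below. The main obstacle, absent in Lemma~\ref{lem:G}, is coercivity of the \emph{mean} of $u$: under Neumann boundary conditions the quadratic term only controls the $H^1$-seminorm, while the logarithmic integrand grows linearly at $+\infty$ and stays bounded at $-\infty$, and the linear term does the opposite. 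I would handle this by a case split on the spatial average of $u$: when the average tends to $+\infty$, the asymptotics $\log(1+e^u)\sim u_+$ combined with the sign of $(\bar q_p+1/\tau)(1-p)-(q_m p+m_k/\tau)$ provides growth; when it tends to $-\infty$, the linear term dominates because $q_m p+m_k/\tau$ has a strictly positive integral. Adding the Poincar\'e--Wirtinger inequality applied to $u$ minus its mean then yields coercivity in $H^1(\Omega)$ for $\tau$ sufficiently small.

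Given the unique minimiser $u\in H^1(\Omega)$, the pointwise identity $m(u)=(1-p)e^u/(1+e^u)\in[0,1-p]$ guarantees $(p,u)\in\overline{\mathcal M}$, so $\mathcal H$ is well-defined. For continuity I would take sequences $(p_n,\tilde m_n)\to(p,\tilde m)$ strongly in $L^2$, apply the uniform coercivity estimate to extract a subsequence of minimisers with $u_{n_k}\rightharpoonup u_\infty$ weakly in $H^1$ and strongly in $L^2$, and check via joint lower semicontinuity in $(u,p,\tilde m)$ for the quadratic and logarithmic terms, together with continuity of the linear term, that $u_\infty$ minimises $H_{(p,\tilde m)}$; uniqueness then upgrades this to convergence of the full sequence. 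Compactness of $\mathcal H$ in its second component follows immediately from the uniform $H^1$-bound on the minimisers and the compact embedding $H^1(\Omega)\hookrightarrow L^2(\Omega)$.
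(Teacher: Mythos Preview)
Your proposal follows essentially the same route as the paper: both recognise \eqref{eq:mtaulin} as the Euler--Lagrange equation of the same convex functional in $u$ (the paper writes the logarithmic term as $e_p^*(u)$, which coincides with your $(1-p)\log(1+e^u)$ up to an additive constant), apply the direct method, and deduce continuity and compactness via uniform $H^1$-bounds, subsequence extraction, and the compact embedding $H^1(\Omega)\hookrightarrow L^2(\Omega)$. The only notable difference is emphasis: you spell out the coercivity in the mean of $u$ via a case split, which the paper dispatches in a single line, whereas the paper gives a detailed weak--strong product argument (via $r_n=\sqrt{\tilde m_n(1-p_n-\tilde m_n)+\tau}\,\partial_x u_n$) for the lower semicontinuity of the weighted Dirichlet integral under varying coefficients, a step you subsume under ``joint lower semicontinuity''.
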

\begin{proof}
Again, we have a variational principle and the proof basically follows the one of the previous lemma. Equation \eqref{eq:mtaulin} is the Euler--Lagrange equation to 
\begin{subequations}
\begin{align}\label{eq:H1}
H_{(p,\tilde{m})}(u):=&\frac{D_\nu}{2}\int_\Omega\left(\tilde{m}(1-p-\tilde{m})+\tau\right)(\partial_x u)^2dx +\bar q_p\int_\Omega e_p^*(u)dx \\ \label{eq:H2}
&-q_m\int_\Omega pudx -\frac{1}{\tau}\int_\Omega\tilde{m}udx+\frac{1}{\tau}\int_\Omega e_p^*(u)dx.
\end{align}
\end{subequations}
First we note that $e_p^*(u)$ is non-negative being the integral of a non-negative quantity. Thus, using the definition of $\M$, coercivity follows as above. Furthermore, the first term in \eqref{eq:H1} is strictly convex while the terms involving $e_p^*$ are convex as well. The remaining terms are linear which yields the lower semincontinuity of the functional and hence the existence of a unique minimiser. To prove continuity of $\mathcal{H}$, first note that the function $e_p^*$ is in fact continuous both with respect to $p$ and $u$ strongly in $L^2(\Omega)$ which can be shown easily using the boundedness and continuity of \eqref{eq:entinv}. To show lower semincontinuity of the first term in \eqref{eq:H1}, we consider the sequence 
\begin{align*}
r_n := \sqrt{\tilde{m}_n(1-p_n-\tilde{m}_n) + \tau}\nabla u_n.
\end{align*}
with $\tilde{m}_n\rightarrow\tilde{m}$ and $p_n\rightarrow p$. Then
\begin{align*}
&\left\|\sqrt{\tilde{m}(1-p-\tilde{m})}-\sqrt{\tilde{m}_n(1-p_n-\tilde{m}_n)}\right\|_{L^2(\Omega)}^2 \leq \left\|\sqrt{| \tilde{m}-\tilde{m}_n-\tilde{m}p+\tilde{m}_np_n-\tilde{m}^2+\tilde{m}^2_n|}\right\|_{L^2(\Omega)}^2 \\
&\qquad\qquad \leq \|\tilde{m}-\tilde{m}_n\|_{L^1(\Omega)} +\|\tilde{m}_np_n-\tilde{m}p\|_{L^1(\Omega)} + \|\tilde{m}_n^2-\tilde{m}^2\|_{L^1(\Omega)}\\
&\qquad\qquad \leq \|\tilde{m}-\tilde{m}_n\|_{L^1(\Omega)} +\underbrace{\|\tilde{m}_n\|_{L^1(\Omega)}}_{<\infty}\| p_n-p\|_{L^1(\Omega)} \\
&\qquad\qquad +\underbrace{\| p\|_{L^1(\Omega)}}_{<\infty}\|\tilde{m}_n-\tilde{m}\|_{L^1(\Omega)} + \|\underbrace{\tilde{m}+\tilde{m}_n}_{\leq 2}\|_{L^1(\Omega)}\|\tilde{m}-\tilde{m}_n\|_{L^1(\Omega)}\\
&\qquad\qquad \leq c\|\tilde{m}-\tilde{m}_n\|_{L^1(\Omega)} + \bar q_m\| p_n-p\|_{L^1(\Omega)},
\end{align*} 
where we used the reverse triangle inequality applied to the square root function and the definition of $\M$. Thus, the first part of $r_n$ converges strongly in $L^2(\Omega)$ while the second part converges weakly in $L^2$. Therefore,
\begin{align*}
r_n \rightharpoonup r \text{ in }L^1(\Omega).
\end{align*}
On the other hand, since $\sqrt{\tilde{m}_n(1-p_n-\tilde{m}_n) + \tau} \in L^\infty(\Omega)$, $r_n$ is bounded in $L^2(\Omega)$ and there exists a weakly converging sub-sequence. This implies 
\begin{align*}
r_n \rightharpoonup r \text{ in }L^2(\Omega),
\end{align*}
which, together with the weak lower semincontinuity of the norm completes the argument. Compactness again follows from the embedding $H^1(\Omega) \hookrightarrow L^2(\Omega)$.
\end{proof}
We are now in a position to prove that the operator $\mathcal{F}$ indeed admits a fixed--point.
\begin{proof}[Proof of Theorem \ref{thm:exiterates}]
Since
\begin{align*}
 0 \le p + m(u) = \frac{e^u(1-p) + p(1+e^u)}{(1+e^u)} = \frac{p + e^u}{1+e^u}\text{ for all } p \in \overline{\mathcal{M}},
\end{align*}
the operator $\mathcal{J}$ as defined in \eqref{eq:entinv} really maps from $\overline{\mathcal{M}}$ into $\M$. Since it is also continous, lemmata \ref{lem:G} and \ref{lem:H} imply that the operator $\mathcal{F}:\mathcal{M}\rightarrow\mathcal{M}$ is in fact compact, continous and self-mapping. Thus, Schauder's fixed point theorem yields the existence of a unique fixed point $(p_{k+1},m_{k+1})\in\mathcal{M}$ being the solution to \eqref{eq:ptau}--\eqref{eq:utau}.
\end{proof}
\subsection{A priori estimates}
To pass to the limit $\tau \to 0$ in \eqref{eq:ptau}--\eqref{eq:utau} and to obtain a weak solution in the sense of definition \ref{def:weak}, additional a priori estimates independent of $\tau$ are needed. Only part of the system has gradient flow structure, yet even for systems with size exclusion possessing this property, the dissipation of the entropy will usually not yield $H^1$ bounds on each density but only weaker estimates. Here, however, the asymmetric structure of the system will allow us to use a different approach, namely to test the discretised equations with linear combinations of the second derivatives of $p$ and $m$. To this end, on 
$$[0,\,T]=\bigcup_{k=0}^{N-1}\left(k\tau,\,(k+1)\tau\right),\quad N = \frac{T}{\tau},$$
we define the piecewise constant (in time) functions $p_\tau$ and $m_\tau$ as
\begin{align*}
p_\tau(x,t)=p_{k+1}(x),\,m_\tau(x,t)=m_{k+1}(x) \quad \forall \,t\in \left(k\tau,\,(k+1)\tau\right). 
\end{align*}
Then we have:
\begin{lemma}[A priori estimates]\label{lem:apriori}
For arbitrary $(p_0,m_0)\in \M \cap H^1(\Omega)$ and $\tau \in (0,T)$ 
% every $k=1,\ldots, N$, 
we denote by $(p_\tau,m_\tau)\in\mathcal{M}$ the functions obtained by interpolating the solution of system \eqref{eq:ptau}--\eqref{eq:utau}. Then, there exists a constant $C>0$, independent of $\tau$, such that the estimates
\begin{align}
\|\partial_x p_\tau\|_{L^\infty((0,T);L^2(\Omega))} + \|\partial_x m_\tau\|_{L^\infty((0,T)L^2(\Omega))} &\leq C(p_0,m_0) \label{4.15}\\
\| p_\tau\|_{L^\infty((0,T);H^1(\Omega))} + \| m_\tau\|_{L^\infty((0,T);H^1(\Omega))} &\leq C(p_0,m_0), \label{4.16}\\
\|\sqrt{(1-\rho_\tau)}\partial_{xx} p_\tau\|_{L^\infty((0,T);L^2(\Omega))} &\le C(p_0,m_0),\\
\|\sqrt{(1-\rho_\tau)}\partial_{xx} m_\tau\|_{L^\infty((0,T);L^2(\Omega))} + \|\sqrt{m_\tau}\partial_{xx} \rho_\tau\|_{L^\infty((0,T);L^2(\Omega))} &\le C(p_0,m_0),
\end{align}
hold.
\end{lemma}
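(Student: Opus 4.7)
The plan is to derive a priori estimates for the iterates $(p_{k+1},m_{k+1})$ produced by Theorem~\ref{thm:exiterates}, uniform in $\tau$, and then transfer them to the piecewise constant interpolants $(p_\tau,m_\tau)$. Following the hint in the text, the key is to test the discrete equations against \emph{linear combinations} of $-\partial_{xx}p_{k+1}$ and $-\partial_{xx}m_{k+1}$: concretely, I would test the $p$-equation against $-\partial_{xx}p_{k+1}$ and, separately, the sum of the $p$- and $m$-equations against $-\partial_{xx}\rho_{k+1}=-\partial_{xx}p_{k+1}-\partial_{xx}m_{k+1}$. Using the no-flux boundary conditions and the discrete chain rule $\int\partial_x(u_{k+1}-u_k)\partial_x u_{k+1}\geq\tfrac12(\|\partial_x u_{k+1}\|^2-\|\partial_x u_k\|^2)$, both tests produce clean decrements of $\tfrac12\|\partial_x p_{k+1}\|^2$ and $\tfrac12\|\partial_x\rho_{k+1}\|^2$ on the time-difference side.

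On the dissipation side, the $p$-test gives $D_\alpha\int(1-\rho_{k+1})(\partial_{xx}p_{k+1})^2$. For the $\rho$-test, expanding $\partial_{xx}m=\partial_{xx}\rho-\partial_{xx}p$ inside the cross-diffusion term $D_\nu\int((1-\rho)\partial_{xx}m+m\partial_{xx}\rho)\partial_{xx}\rho$ reorganises it into $D_\nu\int(1-p_{k+1})(\partial_{xx}\rho_{k+1})^2-D_\nu\int(1-\rho_{k+1})\partial_{xx}p_{k+1}\partial_{xx}\rho_{k+1}$, so that adding the $p$-tested $\rho$-contribution $D_\alpha\int(1-\rho)\partial_{xx}p\,\partial_{xx}\rho$ leaves the coercive term $D_\nu\int(1-p_{k+1})(\partial_{xx}\rho_{k+1})^2$ together with an indefinite remainder $(D_\alpha-D_\nu)\int(1-\rho_{k+1})\partial_{xx}p_{k+1}\,\partial_{xx}\rho_{k+1}$. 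Applying Young's inequality and the pointwise comparison $(1-\rho)\leq(1-p)$, and forming a weighted energy $E_k:=\lambda\|\partial_x p_k\|^2+\|\partial_x\rho_k\|^2$ with $\lambda>(D_\alpha-D_\nu)^2/(2D_\alpha D_\nu)$, absorbs the remainder into the two dissipation terms. The reaction contributions are handled by a further integration by parts, $-\int R_p\partial_{xx}p=\int\partial_x R_p\,\partial_x p$; since $\partial_x R_{p,m}$ is linear in $(\partial_x p,\partial_x m)$ with coefficients polynomial in $p,m$ and hence bounded on $\M$, they contribute only $C(\|\partial_x p\|^2+\|\partial_x m\|^2)\leq CE_{k+1}$. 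The $\tau$-dependent regularisations in \eqref{eq:ptau}--\eqref{eq:utau} either carry favourable signs or are lower-order and do not spoil the estimate.

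A discrete Gronwall argument on $E_{k+1}-E_k\leq C\tau E_{k+1}$ then yields $E_k\leq e^{CT}E_0$ uniformly in $k$ and $\tau$, which, combined with the $L^\infty$ bound $0\leq p_\tau,m_\tau\leq 1$ inherited from $\M$, gives \eqref{4.15} and \eqref{4.16}. A careful look back at the per-step inequality -- exploiting the full discrete chain rule identity $\int\partial_x(u_{k+1}-u_k)\partial_x u_{k+1}=\tfrac12(\|\partial_x u_{k+1}\|^2-\|\partial_x u_k\|^2)+\tfrac12\|\partial_x(u_{k+1}-u_k)\|^2$ and the already established uniform bound on $E_{k+1}$ -- then produces the uniform-in-$k$ bounds on $\|\sqrt{1-\rho_\tau}\,\partial_{xx}p_\tau\|_{L^2(\Omega)}$ and $\|\sqrt{1-p_\tau}\,\partial_{xx}\rho_\tau\|_{L^2(\Omega)}$. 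The remaining estimates follow algebraically: the identity $1-p=(1-\rho)+m$ splits $\int(1-p)(\partial_{xx}\rho)^2$ into $\|\sqrt{1-\rho}\,\partial_{xx}\rho\|^2+\|\sqrt{m}\,\partial_{xx}\rho\|^2$, while $\sqrt{1-\rho}\,\partial_{xx}m=\sqrt{1-\rho}(\partial_{xx}\rho-\partial_{xx}p)$ yields the last bound on $\sqrt{1-\rho}\,\partial_{xx}m$.

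The hard part is the indefinite cross term, and this is precisely what dictates the asymmetric test-function choice. Symmetric testing with $-\partial_{xx}p$ and $-\partial_{xx}m$ would produce instead $D_\nu\int m_{k+1}\partial_{xx}p_{k+1}\partial_{xx}m_{k+1}$, whose modulus cannot be absorbed into the degenerate dissipations $\int(1-\rho)(\partial_{xx}p)^2$ and $\int(1-\rho)(\partial_{xx}m)^2$ because the weight $m$ has no pointwise comparison to $(1-\rho)$. Switching the $m$-equation's test function from $-\partial_{xx}m$ to $-\partial_{xx}\rho$ upgrades the corresponding dissipation weight from $(1-\rho)$ to the pointwise larger $(1-p)=(1-\rho)+m$, which does dominate the surviving cross term after Young's inequality; this structural observation is the essential input beyond the standard $H^1$-energy argument.
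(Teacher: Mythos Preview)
Your approach is correct and closely parallels the paper's, but with a genuinely different choice of test functions. The paper tests \eqref{eq:papriori} against $-\tfrac{D_\nu}{D_\alpha}(2\partial_{xx}p_{k+1}+\partial_{xx}m_{k+1})$ and \eqref{eq:mapriori} against $-\partial_{xx}\rho_{k+1}$; with these specific coefficients the dissipation side assembles \emph{exactly} into
\[
-D_\nu\!\int_\Omega (1-\rho_{k+1})\big[(\partial_{xx}p_{k+1})^2+(\partial_{xx}\rho_{k+1})^2\big]\,dx
-D_\nu\!\int_\Omega m_{k+1}(\partial_{xx}\rho_{k+1})^2\,dx,
\]
so there is no indefinite cross term to absorb. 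The price is paid on the time-difference side, where the paper obtains a non-diagonal quadratic form $v^{T}Mv$ with $M=\bigl(\begin{smallmatrix}2D_\nu/D_\alpha & D_\nu/D_\alpha\\ 1 & 1\end{smallmatrix}\bigr)$ and invokes convexity (positive-definiteness of the symmetric part of $M$) to get the discrete decrement. Your choice---testing the $p$-equation with $-\lambda\partial_{xx}p_{k+1}$ and the summed equation with $-\partial_{xx}\rho_{k+1}$---trades these roles: the time-difference side is immediately a clean diagonal energy $E_k=\lambda\|\partial_x p_k\|^2+\|\partial_x\rho_k\|^2$, while on the dissipation side you must absorb the residual $(D_\alpha-D_\nu)\int(1-\rho)\partial_{xx}p\,\partial_{xx}\rho$ via Young and the pointwise bound $1-\rho\le 1-p$, which your condition $\lambda>(D_\alpha-D_\nu)^2/(2D_\alpha D_\nu)$ guarantees. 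Both routes then handle the reaction terms identically (integrate by parts once, bound by $C(\|\partial_x p\|^2+\|\partial_x m\|^2)$) and close with a discrete Gronwall argument. Your variant has the minor advantage of working transparently for every ratio $D_\nu/D_\alpha>0$, whereas positive-definiteness of the symmetric part of the paper's matrix $M$ imposes an implicit restriction on that ratio; the paper's variant, on the other hand, avoids introducing the auxiliary parameter $\lambda$ and the extra Young step.
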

\begin{proof}
For simplicity, we will work with the discrete iterates and neglect the additional regularisation introduced in \eqref{eq:ptau}--\eqref{eq:mtau} and consider both equations in primal variables, i.e.
\begin{align}\label{eq:papriori}
\frac{p_{k+1}-p_k}{\tau}=& D_\alpha(1-\rho_{k+1})\partial_{xx}p_{k+1} + \alpha p_{k+1}(1-\rho_{k+1})
-\bar q_m p_{k+1} +q_p m_{k+1} \\ \label{eq:mapriori}
 \frac{m_{k+1}-m_k}{\tau} =& D_\nu \left((1-\rho_{k+1})\partial_{xx}m_{k+1}+m_{k+1}\partial_{xx}\rho_{k+1}\right) -\bar q_p m_{k+1} +q_m p_{k+1}
\end{align}
To obtain the desired estimates, we multiply equation \eqref{eq:papriori} first by $-2\frac{D_\nu}{D_\alpha}\partial_{xx}p_{k+1}$ and then by $-\frac{D_\nu}{D_\alpha}\partial_{xx}m_{k+1}$ and integrate over $\Omega$. To increase readability, we first neglect the reaction terms and consider them separately later on. Integrating by parts on the left hand side we obtain 
\begin{subequations}
\begin{align}\label{eq:aprioripp}
\frac{2D_\nu}{D_\alpha}\int_\Omega\!\! \frac{\partial_x(p_{k+1}-p_k)}{\tau}\partial_x p_{k+1}dx 
&= -2D_\nu\!\int_\Omega\!\! (1-\rho_{k+1})(\partial_{xx}p_{k+1})^2dx, \\
\frac{D_\nu}{D_\alpha}\int_\Omega \frac{\partial_x (p_{k+1}-p_k)}{\tau}\partial_x m_{k+1}dx 
&= -D_\nu\!\int_\Omega\!\! (1-\rho_{k+1})\partial_{xx}p_{k+1}\partial_{xx}m_{k+1}dx,  
\end{align}
\end{subequations}
Next we multiply \eqref{eq:mapriori} by $-\partial_{xx}\rho_{k+1}$ which gives
\begin{align}\label{eq:apriorimp}
\int_\Omega \frac{\partial_x (m_{k+1}-m_k)}{\tau}\partial_x \rho_{k+1}dx 
=& -D_\nu\!\int_\Omega\!\! (1-\rho_{k+1})\left((\partial_{xx}m_{k+1})^2 + \partial_{xx}m_{k+1}\partial_{xx}p_{k+1}\right)dx \notag\\
&- D_\nu\!\int_\Omega\!\! m_{k+1}(\partial_{xx}\rho_{k+1})^2dx 
\end{align}
Adding \eqref{eq:aprioripp}--\eqref{eq:apriorimp} results in 
\begin{align}
&\frac{1}{\tau}\int_\Omega \left[\partial_x(p_{k+1}-p_k)\frac{D_\nu}{D_\alpha}\left(2\partial_x p_{k+1} + \partial_x m_{k+1}\right) + \partial_x(m_{k+1}-m_k)\left(\partial_x p_{k+1} + \partial_x m_{k+1}\right)\right]dx \notag\\\label{eq:sumapriori}
=& -D_\nu\int_\Omega (1-\rho_{k+1})\left( (\partial_{xx}p_{k+1})^2 + (\partial_{xx}p_{k+1}+\partial_{xx}m_{k+1})^2\right) dx\\
% +D_\nu\left( \partial_{xx}m_{k+1}\right)^2+ \left(\frac{D_\alpha}{c}+D_\nu\right)\partial_{xx}p_{k+1}\partial_{xx}m_{k+1}\right)dx \notag\\
&-D_\nu\int_\Omega m_{k+1}(\partial_{xx} \rho_{k+1})^2dx \le 0.
% \left[2\partial_{xx}m_{k+1}\partial_{xx}p_{k+1}+\left( \partial_{xx}p_{k+1}\right)^2+\left( \partial_{xx}m_{k+1}\right)^2\right]\mathrm{d}x \label{4.23}
\end{align}
To estimate the left hand side of \eqref{eq:sumapriori}, we rewrite the expression under the integral as
\begin{equation}
\left(\begin{array}{c}\partial_x(p_{k+1}-p_k) \\ \partial_x (m_{k+1}-m_k)\end{array}\right)^T M \left(\begin{array}{c}\partial_x p_{k+1} \\ \partial_x m_{k+1}\end{array} \right) \label{4.24}
\end{equation}
with $M=\left(\begin{array}{cc} 2\frac{D_\nu}{D_\alpha} & \frac{D_\nu}{D_\alpha} \\ 1 & 1\end{array}\right) $. Since $M$ is a positive definite matrix, we can define, for $v\in \mathbb{R}^2$, the bilinear form $F(v)=\frac{1}{2}v^TMv > 0$ with derivative $F'(v)=Mv$. Applying the tangent inequality $F(x)-F(y)\leq (x-y)^T\nabla F(x)$, with $x=\left(\begin{array}{c}\partial_x p_{k+1} \\ \partial_x m_{k+1}\end{array}\right)$ and $y=\left(\begin{array}{c}\partial_x p_k \\ \partial_x m_k\end{array}\right)$ we obtain
\begin{align}
&\frac{1}{\tau}\int_\Omega \left[\partial_x(p_{k+1}-p_k)\frac{D_\nu}{D_\alpha}\left(2\partial_x p_{k+1} + \partial_x m_{k+1}\right) + \partial_x(m_{k+1}-m_k)\left(\partial_x p_{k+1} + \partial_x m_{k+1}\right)\right]dx \notag\\
\geq& F(x)-F(y) = \frac{1}{2}x^TMx -\frac{1}{2}y^TMy \notag\\
=& \frac{1}{2}\left((\partial_x p_{k+1})^2+\frac{1}{2}\partial_x m_{k+1}\partial_x p_{k+1} +(\partial_x m_{k+1})^2\right) - \frac{1}{2}\left((\partial_x p_k)^2 +\frac{1}{2}\partial_x p_k\partial_x m_k +(\partial_x m_k)^2\right)\notag\\
\stackrel{Young}{\geq}& \frac{(\partial_x p_{k+1})^2}{2}-\frac{(\partial_x p_{k+1})^2}{8} +\frac{(\partial_x m_{k+1})^2}{2}-\frac{(\partial_x m_{k+1})^2}{8} \notag\\
&- \frac{(\partial_x p_k)^2}{2}-\frac{(\partial_x p_k)^2}{8}-\frac{(\partial_x m_k)^2}{2}-\frac{(\partial_x m_k)^2}{8}\notag\\
=& \frac{3(\partial_x p_{k+1})^2}{8} - \frac{5(\partial_x p_k)^2}{8} + \frac{3(\partial_x m_{k+1})^2}{8}-\frac{5(\partial_x m_k)^2}{8}. \label{eq:lowerleft}
\end{align}
Thus combining \eqref{eq:sumapriori} and \eqref{eq:lowerleft}, we have
\begin{align*}
\frac{1}{\tau}\int_\Omega \frac{3(\partial_x p_{k+1})^2}{8}  + \frac{3(\partial_x m_{k+1})^2}{8} dx \le \frac{1}{\tau}\int_\Omega \frac{5(\partial_x p_k)^2}{8} + \frac{5(\partial_x m_k)^2}{8}dx. 
\end{align*}
To complete the proof, we need to address the reaction terms $R_p$ and $R_m$. Testing as above gives 
\begin{align*} %\label{eq:Rpapriori}
 -&\frac{D_\nu}{D_\alpha}\int_\Omega R_p(p,m)(2\partial_{xx}p_{k+1}+ \partial_{xx}m_{k+1}) dx \\
 &=\frac{D_\nu\alpha}{D_\alpha}\!\int_\Omega\! (1-\rho_{k+1})\left( 2(\partial_x p_{k+1})^2+\partial_xp_{k+1}\partial_x m_{k+1}\right) - p_{k+1}\partial_x\rho_{k+1}(2\partial_x p_{k+1} + \partial_x m_{k+1})dx \notag\\
&-\frac{D_\nu\bar q_m}{D_\alpha}\!\int_\Omega\!2\left( \partial_x p_{k+1}\right)^2 + \partial_x p_{k+1}\partial_x m_{k+1}\mathrm{d}x +q_p\!\int_\Omega\! 2\partial_x m_{k+1}\partial_x p_{k+1} + (\partial_x m_{k+1})^2dx \notag\\
&\le \frac{D_\nu\alpha}{D_\alpha}\!\int_\Omega\! \frac{5}{2}(\partial_x p_{k+1})^2 + \frac{1}{2}(\partial_x m_{k+1})^2 - p_{k+1}\partial_x p_{k+1}\partial_x m_{k+1}dx\notag\\
&+ \frac{D_\nu\bar q_m}{D_\alpha}\!\int_\Omega\! \frac{5}{2}(\partial_x p_{k+1})^2 + \frac{1}{2}(\partial_x m_{k+1})^2 dx\notag\\
&+ \frac{D_\nu q_p}{D_\alpha}\!\int_\Omega\! (\partial_x p_{k+1})^2 + 2(\partial_x m_{k+1})^2 dx\notag\\
&\le 3\frac{D_\nu}{D_\alpha}(1+\alpha+\bar q_m + q_p)\int_\Omega\! (\partial_x p_{k+1})^2 + (\partial_x m_{k+1})^2 dx\notag
\end{align*}
and
\begin{align*} %\label{eq:Rmapriori}
 -\int_\Omega R_m(p,m)\partial_{xx}\rho_{k+1}dx &=-\bar q_p\int_\Omega\! \partial_x m_{k+1}\partial_x \rho_{k+1} \;dx + q_m\int_\Omega \partial_x p_{k+1}\partial_x \rho_{k+1}dx.\\\nonumber
 &\le \frac{\bar q_p + q_m}{2}\int_\Omega\! (\partial_x p_{k+1})^2 + (\partial_x m_{k+1})^2 dx
\end{align*}
Thus, combining these estimates with those above, we finally arrive at
\begin{align*}
&\frac{1}{\tau}\int_\Omega (\partial_x p_{k+1})^2  + (\partial_x m_{k+1})^2 dx \le \frac{1}{\tau}\int_\Omega \frac{5(\partial_x p_k)^2}{3} + \frac{5(\partial_x m_k)^2}{3}dx\\
&+ \left(8\frac{D_\nu}{D_\alpha}(1+\alpha+\bar q_m + q_p) + \frac{4(\bar q_p + q_m)}{3}\right)\int_\Omega\! (\partial_x p_{k+1})^2 + (\partial_x m_{k+1})^2 dx.
\end{align*}
Introducing 
$\lambda = \left(8\frac{D_\nu}{D_\alpha}(1+\alpha+\bar q_m + q_p) + \frac{4(\bar q_p + q_m)}{3}\right)$, 
and choosing $\tau$ small enough so that $1- \lambda\tau > 0$, then employ a discrete version of Gronwall's Lemma, \cite[Proposition 3.1]{Emmrich99}, which yields the boundedness of the left hand side in terms of the initial data. Since the interpolation in time does not change the regularity in space, we arrive at the assertion. 
\end{proof}
As a corollary, we have the following estimates on the discrete time derivatives
% Due to the above results of the Lemma \ref{lem:apriori} we have $p_\tau,\,m_\tau \in L^2(0,T;H^1(\Omega))$. 
% 
% We define the shift operator $\sigma_\tau$ such that $(\sigma_\tau p_\tau)(x,t)=p_\tau(x,t-\tau)$ and $(\sigma_\tau m_\tau)(x,t)=m_\tau(x,t-\tau)$. 
\begin{cor}[]\label{cor:apriorit} For every $p_\tau,\,m_\tau$ satisfying \eqref{eq:ptau}--\eqref{eq:utau}, we have 
 \begin{equation} \label{4.36}
\frac{1}{\tau}\|p_\tau-\sigma_\tau p_\tau\|_{L^2(0,T;L^2(\Omega))} + \frac{1}{\tau}\|m_\tau-\sigma_\tau m_\tau\|_{L^2(0,T;L^2(\Omega))} \leq C,
\end{equation}
with a constant $C$ independent of $\tau$. 
\end{cor}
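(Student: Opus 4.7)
The plan is to read the corollary directly off the primal discrete equations \eqref{eq:papriori}--\eqref{eq:mapriori}, rearranged as
\begin{align*}
\frac{p_{k+1}-p_k}{\tau} &= D_\alpha(1-\rho_{k+1})\partial_{xx}p_{k+1}+R_p(p_{k+1},m_{k+1}),\\
\frac{m_{k+1}-m_k}{\tau} &= D_\nu\bigl((1-\rho_{k+1})\partial_{xx}m_{k+1}+m_{k+1}\partial_{xx}\rho_{k+1}\bigr)+R_m(p_{k+1},m_{k+1}).
\end{align*}
Squaring pointwise, integrating over $\Omega$, multiplying by $\tau$ and summing over $k=0,\dots,N-1$ reconstructs exactly the squared $L^2(0,T;L^2(\Omega))$-norms of $\tau^{-1}(p_\tau-\sigma_\tau p_\tau)$ and $\tau^{-1}(m_\tau-\sigma_\tau m_\tau)$, so the task reduces to bounding the $L^2$-norms of the right-hand sides uniformly in $\tau$.

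The key observation is that, since the iterates lie in $\mathcal{M}$, one has $0\le 1-\rho_{k+1}\le 1$ and $0\le m_{k+1}\le 1$; therefore
\begin{align*}
\bigl\|(1-\rho_{k+1})\partial_{xx}p_{k+1}\bigr\|_{L^2(\Omega)}^2 &\le \bigl\|\sqrt{1-\rho_{k+1}}\,\partial_{xx}p_{k+1}\bigr\|_{L^2(\Omega)}^2,\\
\bigl\|(1-\rho_{k+1})\partial_{xx}m_{k+1}\bigr\|_{L^2(\Omega)}^2 &\le \bigl\|\sqrt{1-\rho_{k+1}}\,\partial_{xx}m_{k+1}\bigr\|_{L^2(\Omega)}^2,\\
\bigl\|m_{k+1}\partial_{xx}\rho_{k+1}\bigr\|_{L^2(\Omega)}^2 &\le \bigl\|\sqrt{m_{k+1}}\,\partial_{xx}\rho_{k+1}\bigr\|_{L^2(\Omega)}^2,
\end{align*}
and all three right-hand sides are controlled, uniformly in $k$, by the $L^\infty(0,T;L^2(\Omega))$ estimates already provided by Lemma \ref{lem:apriori}. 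After multiplying by $\tau$ and summing, each contributes a bound of order $T$.

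The reaction terms $R_p,R_m$ are polynomials in $(p_{k+1},m_{k+1})$ evaluated on the bounded set $\mathcal{M}\subset[0,1]^2$, hence uniformly bounded pointwise by a constant depending only on $\alpha,\mu,q_p,q_m$; their contribution to the sum is therefore at most $CT|\Omega|$. Combining these ingredients via the elementary inequality $(a+b+c)^2\le 3(a^2+b^2+c^2)$ yields the claim. There is no real obstacle: since Lemma \ref{lem:apriori} was crafted precisely to supply the weighted second-derivative bounds $\sqrt{1-\rho_\tau}\partial_{xx}p_\tau$, $\sqrt{1-\rho_\tau}\partial_{xx}m_\tau$ and $\sqrt{m_\tau}\partial_{xx}\rho_\tau$ in $L^\infty(0,T;L^2(\Omega))$, the only point to verify is that those are exactly the three expressions appearing on the right-hand sides of the equations once the factors $(1-\rho_{k+1})$ and $m_{k+1}$ are split as $\sqrt{\cdot}\cdot\sqrt{\cdot}$.
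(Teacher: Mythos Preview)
Your proposal is correct and follows exactly the route the paper indicates: read the difference quotients off the primal discrete equations and bound each right-hand side in $L^2(\Omega)$ using the weighted second-derivative estimates of Lemma~\ref{lem:apriori} together with the pointwise bounds coming from $(p_{k+1},m_{k+1})\in\mathcal{M}$. Your explicit splitting $(1-\rho)=\sqrt{1-\rho}\cdot\sqrt{1-\rho}$ and $m=\sqrt{m}\cdot\sqrt{m}$ is precisely what makes the lemma applicable, and the paper's one-line proof (``follows directly from the strong form \dots\ as the a priori estimates above yield the boundedness of the right hand side'') is just the compressed version of what you wrote out.
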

\begin{proof} This follows directly from the strong form of the equations in primal variables as the a priori estimates above yield the boundedness of the right hand side of the equations in the desired space.
\end{proof}
\subsection{Limit $\tau \to 0$}
The functions $p_\tau$, $m_\tau$ as defined in the previous section satisfy
\begin{align}\notag
\frac{1}{\tau}\int_0^T \int_\Omega (1-\rho_\tau)(p_\tau-\sigma_\tau p_\tau)\varphi_1\;dxdt =&  D_\alpha \int_0^T\int_\Omega (1-\rho_\tau)^2\partial_{xx} p_\tau\varphi_1 \,\mathrm{d}x\,dt\\\label{eq:pweaktau}
&+\int_0^T\int_\Omega (1-\rho_\tau)R_p(p_\tau,m_\tau) \varphi_1dxdt,\\ \notag
\frac{1}{\tau}\int_0^T\int_\Omega (1-\rho_\tau)(m_\tau-\sigma_\tau m_\tau)\varphi_2\;dxdt =&  D_\nu \int_0^T\int_\Omega \left((1-\rho_\tau)^2\partial_{xx} m_\tau+(1-\rho_\tau)m_\tau\partial_{xx}\rho_\tau\right)\varphi_2 \,\mathrm{d}x\,dt\\
&+ \int_0^T\int_\Omega (1-\rho_\tau)R_m(p_\tau,m_\tau)\varphi_2dxdt, \label{eq:mweaktau}
\end{align}
for every $\varphi_1,\,\varphi_2 \in L^2((0,T),H^1(\Omega))$. Note that due to the a priori estimates of Lemma \ref{lem:apriori}, all integrals above are well-defined.\\
In the remaining part of this section, we will show that as $\tau \to 0$, we recover a weak solution in the sense of definition \ref{def:weak}. Using the bounds of Lemma \ref{lem:apriori} and corollary \ref{cor:apriorit}, the Aubin-Lions-Simon Lemma, \cite{Lions1969}, yields
\begin{equation} \label{4.37}
p_\tau\rightarrow p,\quad m_\tau\rightarrow m,\quad \rho_\tau \rightarrow \rho\quad \text{in } \, L^2(0,\,T;\,L^2(\Omega)),
\end{equation}
for subsequences which we again label $p_\tau$ and $m_\tau$. This also implies the convergence of $1-\rho_\tau\rightarrow 1-\rho$ in $L^2(0,\,T;\,L^2(\Omega))$ and $\sqrt{1-\rho_\tau}\rightarrow \sqrt{1-\rho}$ in $L^4(0,\,T;\,L^4)$ and, using the continuous embedding, in $L^2(0,\,T;\,L^2(\Omega))$. We now treat each term in \eqref{eq:pweaktau}--\eqref{eq:mweaktau} separately, starting with the first term on the right hand side of \eqref{eq:pweaktau}. Integrating by parts, we have
\begin{align*}
 D_\alpha \int_0^T\int_\Omega (1-\rho_\tau)^2\partial_{xx} p_\tau \varphi_1 \,\mathrm{d}x\,dt &= - D_\alpha \int_0^T\int_\Omega (1-\rho_\tau)^2\partial_x p_\tau\partial_x\varphi_1 \,\mathrm{d}x\,dt\\
 &+ 2D_\alpha \int_0^T\int_\Omega(1-\rho_\tau)\partial_x\rho_\tau\partial_x p_\tau\varphi_1 \,\mathrm{d}x\,dt.
\end{align*}
For the first term we note that $(1-\rho_\tau)^2\partial_x\varphi_1$ converges, due to $0 \le \rho \le 1$ and Lebesgues dominated convergence theorem, strongly in $L^2(\Omega)$. For the second term, since $\partial_x \rho_\tau$ converges weakly in $L^2$, we will show the strong convergence of $(1-\rho_\tau)\partial_x p_\tau$ in $L^2$. Differentiating this term w.r.t. $x$ and $t$ (as we are dealing with piecewise constant functions in time, differentiating with respect to $t$ actually means taking shiftes differences, yet for the sake of presentation we make an abuse of notation and use $\partial_t$ instead) gives 
\begin{align}
\partial_x\big((1-\rho_\tau)\partial_x p_\tau\big) =& (1-\rho_\tau)\partial_{xx}p_\tau - \partial_x\rho_\tau\partial_x p_\tau, \label{eq:aubin1}\\
\partial_t\big((1-\rho_\tau)\partial_x p_\tau\big) =& -\partial_t\rho_\tau\partial_x p_\tau + (1-\rho_\tau)\partial_x\partial_t p_\tau \notag\notag\\
=& -\partial_t\rho_\tau\partial_x p_\tau + \partial_x\rho_\tau\partial_t p_\tau + \partial_x\big((1-\rho_\tau)\partial_t p_\tau\big) . \label{eq:aubin2}
\end{align}
Due to Lemma \ref{lem:apriori}, we see that the right hand side of \eqref{eq:aubin1} and thus $\partial_x\big((1-\rho_\tau)\partial_x p_\tau\big)$ is in fact bounded in $L^1(\Omega)$. As for \eqref{eq:aubin2}, corollary \ref{cor:apriorit} ensures that the first two terms on the right hand side are both $L^2((0,T);L^1(\Omega))$. By the same argument we see that the remaining term in bounded in $H^{-1}(\Omega)$. This yields the estimate
\begin{equation}
\|\sigma_\tau\left((1-\rho_\tau)\partial_x p_\tau\right)-(1-\rho_\tau)\partial_x p_\tau\|_{L^1(0,T;H^{-1}(\Omega))} + \|(1-\rho_\tau)\partial_x p_\tau\|_{L^2(0,T;W^{1,1}(\Omega))} \leq C \notag
\end{equation} 
We can now apply a variant of the Aubin-Lions-Simon Lemma for piecewise constant functions, \cite[Theorem 3]{Chen2013}, to obtain the strong convergence (of a subsequence) of $(1-\rho_\tau)\partial_x p_\tau$ in $L^2((0,T);L^2(\Omega))$ and thus the convergence of the first term on the right hand side of \eqref{eq:pweaktau}. In the same way, we obtain
% Auf gleiche Weise erhält man die Konvergenzen der verbliebenen Terme in (a) und auch in (c), da hier analog die starke Konvergenz in $L^2(0,\,T;\,L^2(\Omega))$ von $m_\tau\partial_x\rho_\tau$ gezeigt werden kann:
\begin{align*}
(1-\rho_\tau)\partial_x m_\tau \partial_x\rho_\tau &\overset{\tau\rightarrow 0}{\rightarrow} (1-\rho)\partial_x m \partial_x\rho \quad \text{in} \; L^2(0,\,T;\,L^2(\Omega)), \\
(1-\rho_\tau)\partial_x p_\tau \partial_x m_\tau &\overset{\tau\rightarrow 0}{\rightarrow} (1-\rho)\partial_x p \partial_x m \quad \text{in} \; L^2(0,\,T;\,L^2(\Omega)), \\
(1-\rho_\tau)\partial_x m_\tau \partial_x m_\tau &\overset{\tau\rightarrow 0}{\rightarrow} (1-\rho)\partial_x m \partial_x m \quad \text{in} \; L^2(0,\,T;\,L^2(\Omega)), \\
m_\tau\partial_x \rho_\tau \partial_x p_\tau &\overset{\tau\rightarrow 0}{\rightarrow} m\partial_x \rho \partial_x p \quad \text{in} \; L^2(0,\,T;\,L^2(\Omega)), \\
m_\tau\partial_x \rho_\tau \partial_x m_\tau &\overset{\tau\rightarrow 0}{\rightarrow} m\partial_x \rho \partial_x m \quad \text{in} \; L^2(0,\,T;\,L^2(\Omega)), 
\end{align*}
which is enough to pass to the limit in the first term of \eqref{eq:mweaktau}. The strong convergence of $p_\tau, m_\tau$ and thus $\rho_\tau$ is also sufficient to handle the reaction terms while the bounds of corollary \ref{cor:apriorit} yields convergence of the discrete time derivatives.\\
For the initial data we employ the following trace inequality, cf. \cite{Schweizer2013}: For every $t\in[0,T]$  $p(\cdot,t),\,m(\cdot,t)\in L^2(\Omega)$ and $v\in L^2(0,T;H^1(\Omega))$ we have 
\begin{align*}
\left\Vert v(\cdot,t)\right\Vert_{L^2(\Omega)}\leq c\left\Vert v\right\Vert_{H^1(\Omega\times(0,t))}.
\end{align*}
Replacing $v$ by $p-p_0$ and $m-m_0$, we obtain
\begin{align*}
\left\Vert p(\cdot,t)-p_0\right\Vert_{L^2(\Omega)} &\leq c\left\Vert p-p_0\right\Vert_{H^1(\Omega\times(0,t))} \rightarrow 0 \quad \text{ as } \; t\rightarrow 0,\\
\left\Vert m(\cdot,t)-m_0\right\Vert_{L^2(\Omega)} &\leq c\left\Vert m-m_0\right\Vert_{H^1(\Omega\times(0,t))} \rightarrow 0 \quad \text{ as } \; t\rightarrow 0.
\end{align*}
This completes the proof of theorem \ref{thm:main} and ends this section.

\section{Asymptotic behaviour of solutions}

In the following we further discuss the asymptotics of the solutions, with respect to large-time as well as with respect to parameters in the system. We start with a discussion of stationary states, then proceed to stability issues and finally return to the connection with the Fisher-Kolmogorov equation.

\subsection{Stationary states}

System \eqref{eq:p}--\eqref{eq:m} features two constant stationary solutions, namely the trivial one $(\bar p_1,\bar m_1) = (0,0)$ and
\begin{align*}
 (\bar p_2, \bar m_2) = \left(\frac{(-\mu^2+(-q_m-q_p+\alpha)\mu+\alpha q_p)(\mu+q_p)}{\alpha(\mu+q_m+q_p)(\mu+q_m)}, -\frac{(\mu^2+(q_m+q_p-\alpha)\mu-\alpha q_p)q_m}{\alpha(\mu+q_m+q_p)(\mu+q_p)}\right).
\end{align*}
Interestingly, depending on the parameters, the state $(\bar p_2, \bar m_2)$ can be negative. Since the dynamics of the systems preserves the nonnegativity of solutions, this means that in these cases, this state cannot be reached. It is expected that the system then converges to $(0,0)$, meaning that the tumour will disappear. 

Examining the nominator of $\bar p_2$ (as $\bar m_2$ is a multiple of $\bar p_2$), we obtain a lower bound on the growth rate $\alpha$, i.e.
\begin{align}\label{eq:alphalower}
 \alpha \ge \frac{\mu^2 + \mu(q_p+q_m)}{\mu + q_p} = \mu\left(1 + \frac{q_m}{\mu + q_p}\right).
\end{align}
For $\mu = 0$ this is always satisfied while for $\alpha = \mu$ it is not, independent of $q_m$ and $q_p$ as long as $q_m > 0$. For large values of $\mu$, the expression becomes linear while for $q_p\to 0$ is will converge to infinity. In this situation, the motile cells cannot become proliferating ones and so the absorption will eventually destroy the tumour. See also figure \ref{fig:loweralpha} for a plot for some exemplary values of $q_p$ and $q_m$.
\begin{figure}
\begin{center}
  \includegraphics[width=.7\textwidth]{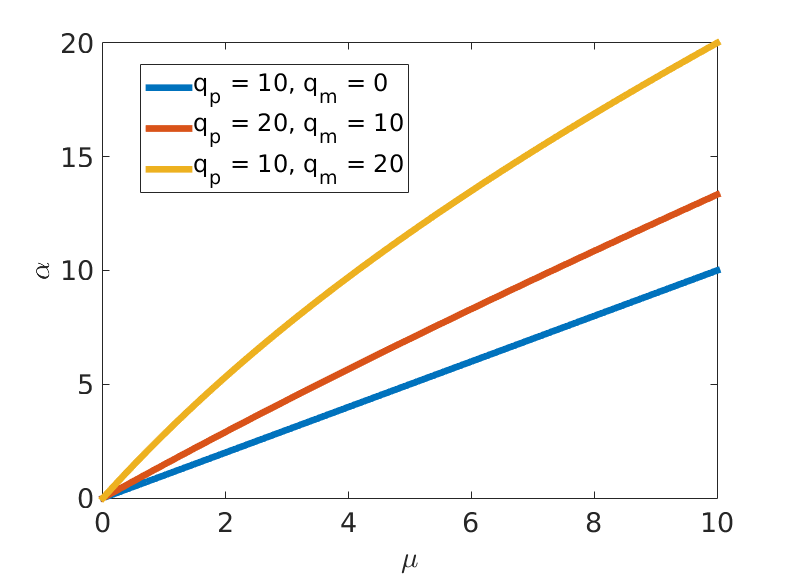}
\end{center}
 \caption{The lower bound of \eqref{eq:alphalower} as a function of $\mu$ for $(q_p,q_m) = (10,0)$, $(q_p,q_m) = (20,10)$ and $(q_p,q_m) = (10,20)$.}
 \label{fig:loweralpha}
\end{figure} 
%
%Strictly speaking, the nonexistence of a second nonnegative constant stationary state does not exclude the possibility of having nontrivial (nonconstant) stationary states for the system. To show that this is not the case for small $\alpha$ let us investigate the stationary system in a form diagonalizing the diffusion, obtained from replacing \eqref{eq:mnd} by a suitable linear combination with \eqref{eq:pnd}, denoting $\delta=\frac{D_\nu}{D_\alpha}$:
%\begin{align*}
%- D_{\alpha}(1-\rho)\Delta p&= -(\mu+q_m)p+q_pm + \alpha p(1-\rho), \\
%- D_{\nu} (1-\rho)^2\Delta m &= - (1-\rho)(\mu+q_p)m+(1-\rho)q_mp + \delta m (\mu+q_m)p 
%- \delta q_p m^2- \alpha \delta m p (1-\rho).  
%\end{align*}
%Now let $M:=\sup_x m(x)$ and $P:=\sup_x p(x)$. Then a standard maximum principle for the first equation together with the nonnegativity of $\rho$ implies 
%$$ (\mu + q_m - \alpha) P \leq q_p M. $$

\subsection{Linear stability}
 
With perturbations $\tilde p$, $\tilde m$, the linearized system around a constant stationary state $\bar p, \bar m$ is given by 
\begin{align*}
 \partial_t \tilde p &= D_\alpha (1-\bar p)\Delta  p - (\mu + q_m)\tilde p + q_p \tilde m + \alpha (1-\bar \rho)\tilde p - \alpha \bar p \tilde \rho,\\
 \partial_t \tilde m &= (1-\bar \rho)\Delta \tilde m + \bar m \Delta \tilde p - (\mu + q_p)\tilde m + q_m\tilde p.
\end{align*}
Let $-k^2$ be an eigenvalue of the Laplace operator, then for the perturbation in direction of the corresponding eigenvector we obtain a linear ordinary differential equation with system matrix
\begin{align*}A = 
\left(\begin{array}{cc}
 -D_\alpha k^2(1-\bar p)-  (\mu + q_m) + \alpha (1-\bar \rho) - \alpha \bar p  & q_p - \alpha \bar p \\
  -D_\nu k^2 \bar m+  q_m   & - D_\nu k^2  (1-\bar p) -(\mu + q_p)
       \end{array}\right).
\end{align*}
Hence, as usual for dynamical systems, linear stability is characterised by the negativity of the real parts of the eigenvalues of this system.
Consider first the trivial stationary state $(\bar p_1,\bar m_1) = (0,0)$, with the corresponding matrix
\begin{align*}A_1 = 
\left(\begin{array}{cc}
 -D_\alpha k^2 -  (\mu + q_m) + \alpha   & q_p   \\
   q_m   & - D_\nu k^2   -(\mu + q_p)
       \end{array}\right).
\end{align*}
Note that $\alpha =0$ and $\mu$ nonnegative it is straightforward to see that the eigenvalues have negative real values, hence the stationary state is stable. On the other hand for $\alpha$ large we see that $\frac{1}\alpha A_1$ can be seen as a perturbation of the matrix
\begin{align*}B_1 = 
\left(\begin{array}{cc}
 1  & 0  \\
   0   & 0
       \end{array}\right).
\end{align*}
which has a positive eigenvalue. Hence, as expected, for $\alpha$ sufficiently large the trivial stationary state becomes unstable.

For general parameters, its difficult to determine the sign of the eigenvalues, yet in the case when the trivial state is unstable, one expects a travelling wave solution to connect it to the nontrivial one. This has already been observed in \cite{Gerlee2012} and will be discussed in the next subsection.  

\subsection{Travelling wave solutions}
The model \eqref{eq:p}--\eqref{eq:m} can be understood as an extension to two different species of the classical Fisher-Kolmogorov-Equation, \cite{Murray2003}, with logistic non-linearity given as 
\begin{align*}
 \partial_t \rho = D\Delta \rho + \rho(1-\rho).
\end{align*}
It is well known that this equation features travelling wave solutions and therefore, already in \cite{Gerlee2012}, the existence of such solutions for \eqref{eq:p}--\eqref{eq:m} was examined. From a biological point of view these solutions are highly relevant since the wave speed corresponds to the growth rate of the tumour. We briefly repeat the discussion and start from the ansatz
\begin{align*}
p(x,t)=P(x-ct),\quad m(x,t)=M(x-ct) 
\end{align*}
which yields the following system of ODEs
\begin{align*}
P'=\,&Q \\
M'=\,&N \\
Q'=\,&\frac{2}{\alpha(1-P-M)}\big( \bar q_mP-q_pM-cQ-\alpha P(1-P-M)\big)\\
N'=\,&\frac{2}{\nu(1-P)}\Big( \bar q_pM-\frac{\nu M}{\alpha(1-P-M)}\big( \bar q_mP -q_pM-cQ\\
&-\alpha P(1-P-M) \big) -cN-q_mP \Big).
\end{align*}
supplemented with the boundary conditions
% der Mastergleichungen \eqref{p3} und \eqref{m3} erhält man zusätzlich aufgrund der Bedingung an eine \textit{Travelling Wave Lösung} \eqref{lim1} aus Abschnitt 2.1.1 die Randbedingungen\\
\begin{align*}
&P(-\infty)=\bar p_2,\quad M(-\infty)=\bar m_2,\quad Q(-\infty)=0, \quad N(-\infty)=0, \\
&P(\infty)=0, \quad M(\infty)=0,\quad Q(\infty)=0, \quad N(\infty)=0.
\end{align*}
% This yields that $p_1=(P,M,Q,N)=(0,0,0,0)$ and $p_2=(p^*,m^*,0,0)$ are stationary points, 
corresponding to a healthy state with no tumour cells ($p_1$) and one with constant tumour density everywhere ($p_2$). To get an estimate on the wave speed, Garlee et al. \cite{Gerlee2012}, again perform a linear stability analysis with $c$ as a parameter and try to find the lowest value of $c$ such that the real part of all eigenvalues is negative. 
In the proceeding section, we will present several numerical examples complementing these analytic considerations.

\subsection{Scaling limit to Fisher-Kolmogorov}

Finally we comment further on the relation of the system to the Fisher-Kolmogorov equation for the total density. Indeed the latter can be obtained as a scaling limit. It is natural from the lattice-based derivation to assume that $D_\alpha \ll \alpha$, so we introduce a small parameter $\epsilon$ and assume $D_\alpha = \epsilon^\nu \tilde D_\alpha$ for $\nu > 0$ and $\tilde D_\alpha$ being at order one just as $\alpha$.  We further need a fast transition between the two states, so that they effectively become indistinguishable. Thus, we assume that $q_i = \frac{1}\epsilon  \tilde q_i$ for $i=m,p$, which leads to 
\begin{align}
\partial_t p&=\epsilon^\nu D_{\alpha}(1-\rho)\Delta p - \mu q  + \alpha p (1-\rho) +  \frac{1}\epsilon(\tilde q_p m - \tilde q_mp), \\
\partial_t m &= D_{\nu}\left((1-\rho)\Delta m +m \Delta \rho\right)  -\mu m - \frac{1}\epsilon(\tilde q_p m - \tilde q_mp).
\end{align}
We can consider a transformed system for the difference and the sum of these two equations. In the difference, there is a leading order $\frac{1}\epsilon$, which in the scaling limit yields $m= \lambda p$ with $\lambda = \frac{\tilde q_m}{\tilde q_p}$. Ignoring higher-order terms in $\epsilon$, the sum of the equations is given by 
$$ \partial_t \rho = D_{\nu}\left((1-\rho)\Delta m +m \Delta \rho\right)  -\mu \rho  + \alpha p (1-\rho). $$
Inserting $p=\frac{1}{1+\lambda} \rho$ and $p=\frac{\lambda}{1+\lambda} \rho$ we recover the Fisher-Kolmogorov equation
$$ \partial_t \rho = D \Delta \rho -\mu \rho  + \beta \rho (1-\rho), $$
with $D= D_\nu \frac{\lambda}{1+\lambda}$ and $\beta = \frac{\alpha}{1+\lambda}$.

\section{Numerics}
To discretise the system \eqref{eq:p}--\eq{eq:bcs}, we employ a conforming first order $H^1$ finite element discretisation, both in one and two spatial dimensions. To this end, we divide our domain into elements which are triangles in two space dimensions and intervals in 1D. For simplicity, we shall only discuss the two-dimensional case from now on. We cover the domain $\Omega \subset \RR^2$ by a finite collection of triangles which we denote by $\mathcal{T}_h$, where $h$ refers to the diameter of the largest triangle. On $\mathcal{T}_h$ we introduce the discrete space
\begin{align*}
 V_h = \{w \in H^1(\Omega) \;|\; w_{\left.\right|T}\in \mathcal{P}^1(T) \;\;\forall T \in \mathcal{T}_h \}
\end{align*}
where $\mathcal{P}^1(T)$ are the polynomials of degree one on $T$. We discretise in time, with stepsize $\tau>0$, using the following implicit-explicit (IMEX) scheme:
\begin{align*}
p_{k+1}&=p_k + \tau D_{\alpha}(1-\rho_k)\Delta p_{k+1} + R_p(p_k,m_k),\\
m_{k+1}&=m_k + \tau D_{\nu}\left((1-\rho_k)\Delta m_{k+1}+m_k\Delta\rho_{k+1}\right) + R_m(p_k,m_k).
\end{align*}
For our one dimensional examples we consider the domain $\Omega = [0,100]$, discretised into $600$ elements. In a first test we chose an initial datum with proliferating tumour cells $p$ both at the left boundary, i.e.
$$
p(x,0) = e^{-4x} + e^{-5(x-50)^2}\text{ and } m(x,0) = 0.
$$
The results are shown in figure \ref{fig:justwaves} and as expected, we obtain a travelling wave solution. In a second example, we put an additional bump of cells in the middle of the domain to observe the merging of two waves, see figure \ref{fig:meeting}. Finally, we perform a 2D simulation. Here, we chose a tumour with a hole to see how if and how it recovers, see figure \ref{fig:2dhole2}. We conclude by presenting two examples that are now covered by our present existence analysis, yet can be included at the cost of additional technicalities. In the first one, we introduce anisotropic diffusion to the motile cells, following the ideas presented in \cite{Painter2013}. More precisely, we rewrite equation \eqref{eq:m} in conservative form and introduce a space-dependent anisotropy which yields
\begin{align}\label{eq:mani}
 \partial_t m  = \nabla\cdot \left( D_\nu D_a(x) [ (1-\rho)\nabla m + m \nabla \rho)]\right) + R_m(p,m),
\end{align}
with 
\begin{align}\label{eq:ani_ex1}
 D_a(x) = \left(\begin{array}{cc}
                 \frac{1}{2} - d(x) & 0 \\
                 0 & \frac{1}{2} + d(x)
                \end{array}\right).
\end{align}
In our example presented in figure \ref{fig:2dani}, we constructed $d(x)$ by smoothing of uniformly distributed white noise, normalised such that $-0.5 < d(x) < 0.5$. For the second example, we consider 
\begin{align}\label{eq:ani_ex2}
 D_a(x) = (1-\delta_a) I_2 + \delta_a\gamma^T\gamma,
\end{align}
with $0 \le \delta_a \le 1$ and a given direction $\gamma$. By $I_2$, we denote the identity matrix in two dimensions. In figure \ref{fig:2dani2}, we present results for $\gamma = (1,1)^\mathrm{T}$ and different values of $\delta_a$ at time $t=100$.

\begin{figure}
\begin{center}
\begin{subfigure}[t]{0.325\textwidth}
      \centering
      \includegraphics[width=\textwidth]{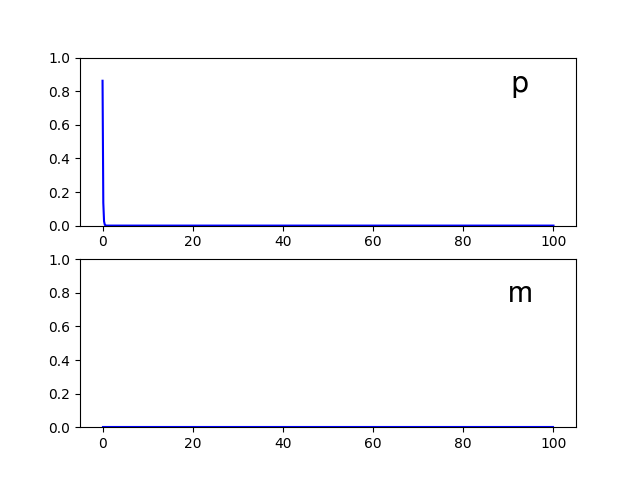}
      \caption{$t=0$}
  \end{subfigure}%
  \begin{subfigure}[t]{0.325\textwidth}
      \centering
      \includegraphics[width=\textwidth]{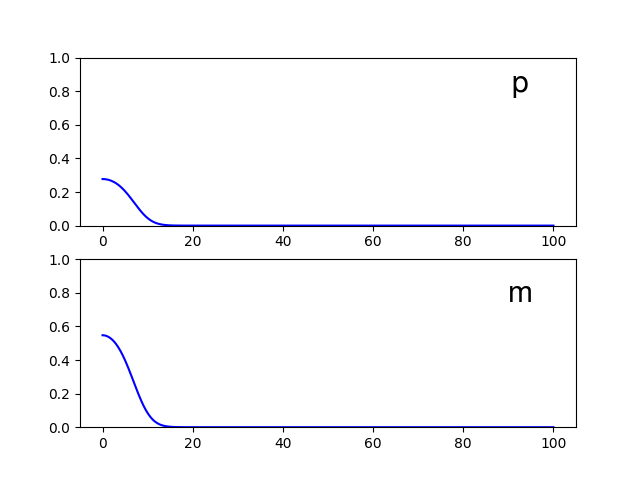}
      \caption{$t=1.7$}
  \end{subfigure}%
  \begin{subfigure}[t]{0.325\textwidth}
      \centering
      \includegraphics[width=\textwidth]{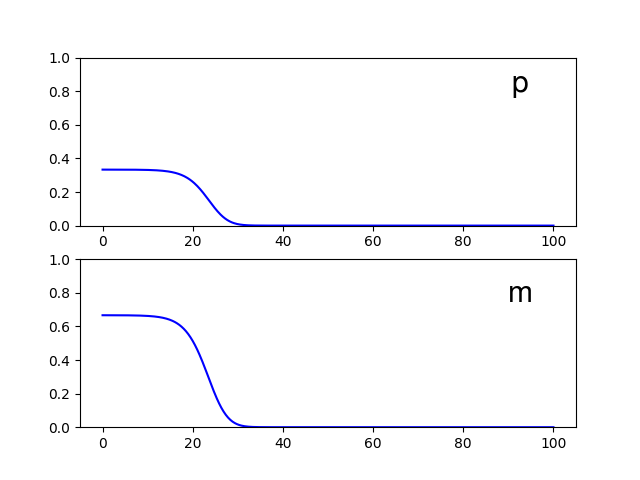}
      \caption{$t=3.9$}
  \end{subfigure}
 \end{center}
 \caption{Densities of $p$ (top row) and $m$ (bottom row) for parameters $\alpha =2$, $D_\alpha = D_\nu = 0.025$, $q_m=20$, $q_p=10$ and $\mu = 0$. The initial datum is given by $p(x,0) = e^{-10x}$ and $m(x,0)=0$}
 \label{fig:justwaves}
\end{figure} 

\begin{figure}
\begin{center}
\begin{subfigure}[t]{0.325\textwidth}
      \centering
      \includegraphics[width=\textwidth]{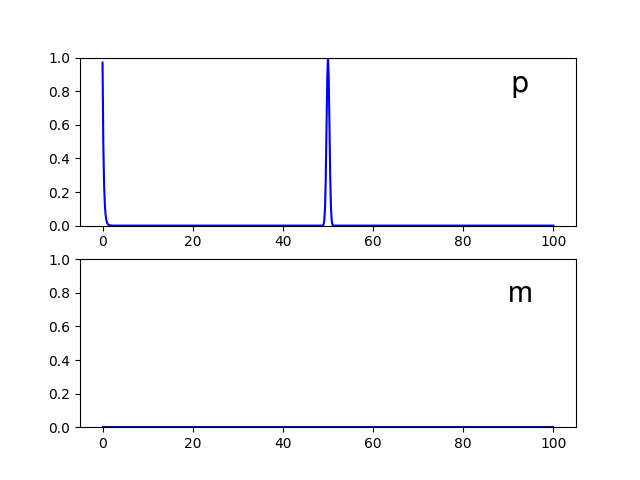}
      \caption{$t=0$}
  \end{subfigure}%
  \begin{subfigure}[t]{0.325\textwidth}
      \centering
      \includegraphics[width=\textwidth]{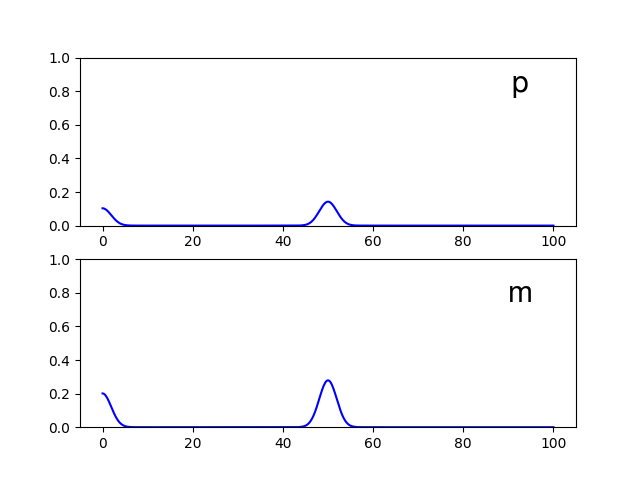}
      \caption{$t=3.5$}
  \end{subfigure}%
  \begin{subfigure}[t]{0.325\textwidth}
      \centering
      \includegraphics[width=\textwidth]{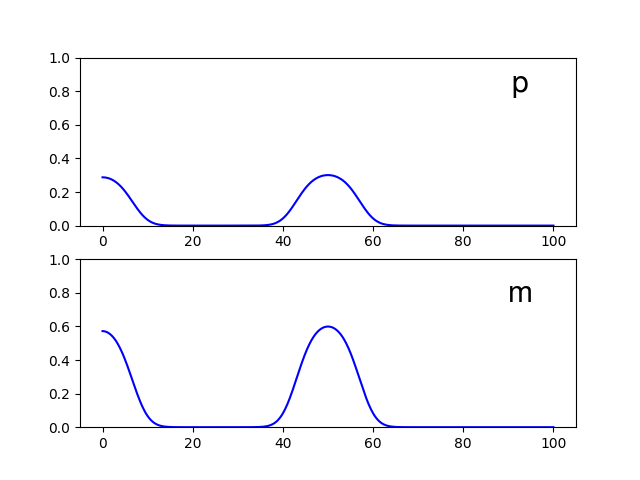}
      \caption{$t=14$}
  \end{subfigure}
  \\
  \begin{subfigure}[t]{0.325\textwidth}
      \centering
      \includegraphics[width=\textwidth]{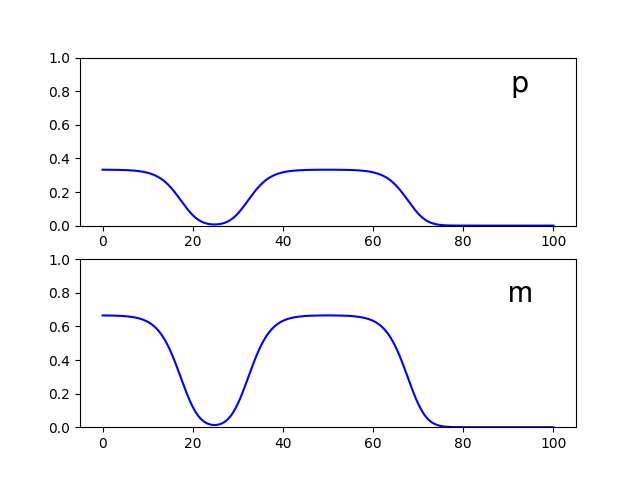}
      \caption{$t=29$}
  \end{subfigure}%
  \begin{subfigure}[t]{0.325\textwidth}
      \centering
      \includegraphics[width=\textwidth]{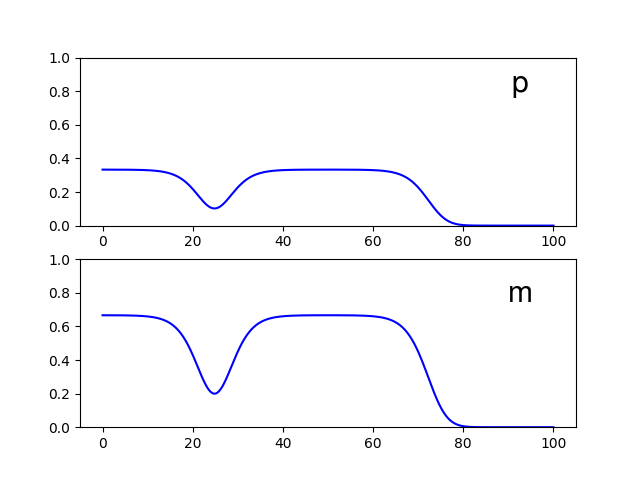}
      \caption{$t=35$}
  \end{subfigure}%
  \begin{subfigure}[t]{0.325\textwidth}
      \centering
      \includegraphics[width=\textwidth]{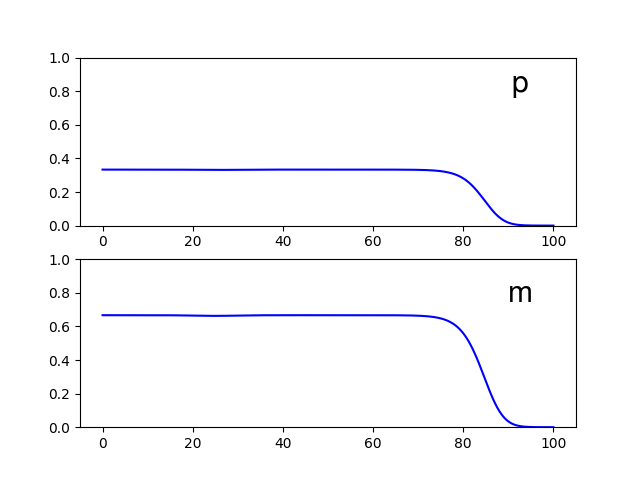}
      \caption{$t=51$}
  \end{subfigure}%
 \end{center}
 \caption{Illustration of two travelling waves meeting each other. Densities of $p$ (top row) and $m$ (bottom row) for parameters $\alpha = 1$, $D_\alpha = D_\nu = 0.5$, $q_m=20$, $q_p=10$ and $\mu = 0$. The initial datum is given by $p(x,0) = \exp(-4x) + 0.99\exp(-5(x-50)^2)$ and $m(x,0)=0$.}
 \label{fig:meeting}
\end{figure} 

\begin{figure}
\begin{center}
      \includegraphics[scale=.22]{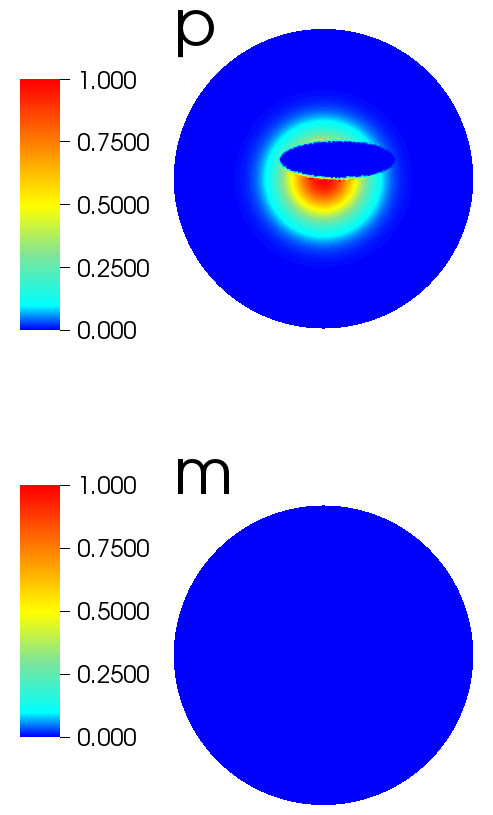}
      \includegraphics[scale=.22]{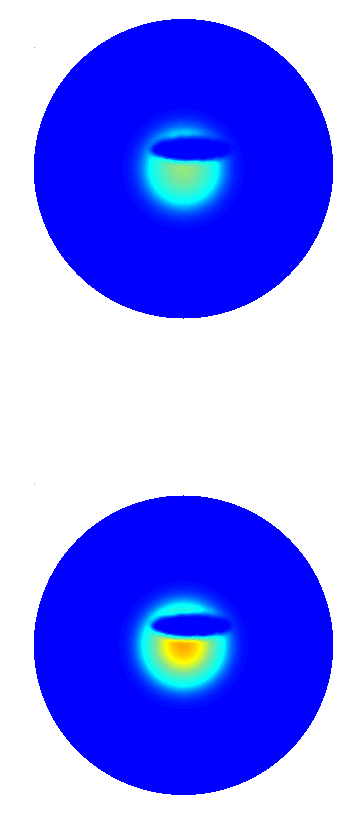}
      \includegraphics[scale=.22]{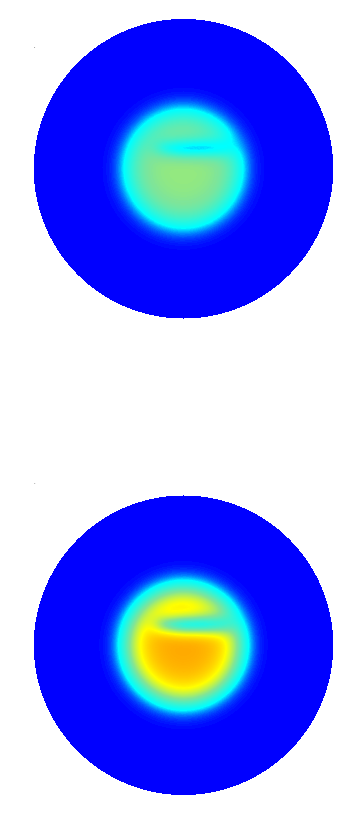}
      \includegraphics[scale=.22]{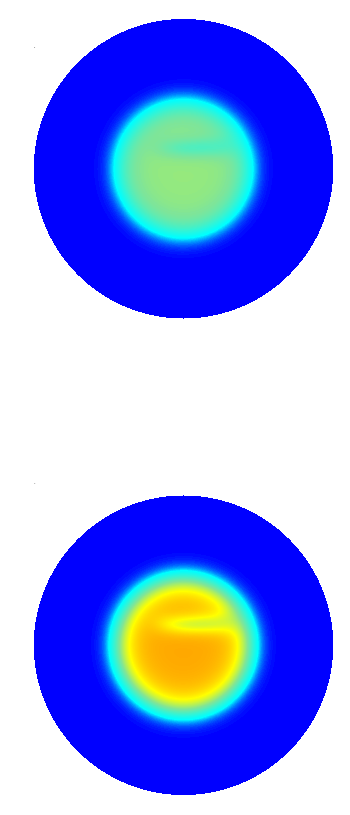}
      \includegraphics[scale=.22]{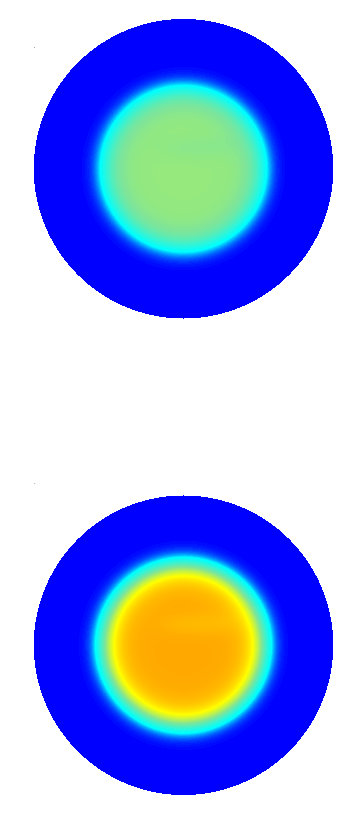}
      \begin{flushleft}
      \hskip6em $ t = 0$  \hskip4em $t = 0.05$ \hskip4em $t = 0.5$ \hskip4em $t = 0.7$ \hskip4.5em $t = 1$
      \end{flushleft}
  \end{center}
  
   \caption{Densities of $p$ (top row) and $m$ (bottom row) for parameters $\alpha =2$, $D_\alpha = D_\nu = 0.025$, $q_m=20$, $q_p=10$ and $\mu = 0$. The initial datum is given by $p(x,0) = e^{-4x} + e^{-5(x-50)^2}$ and $m(x,0)=0$.}
 \label{fig:2dhole2}
\end{figure} 

\begin{figure}
\begin{center}
      \includegraphics[scale=.22]{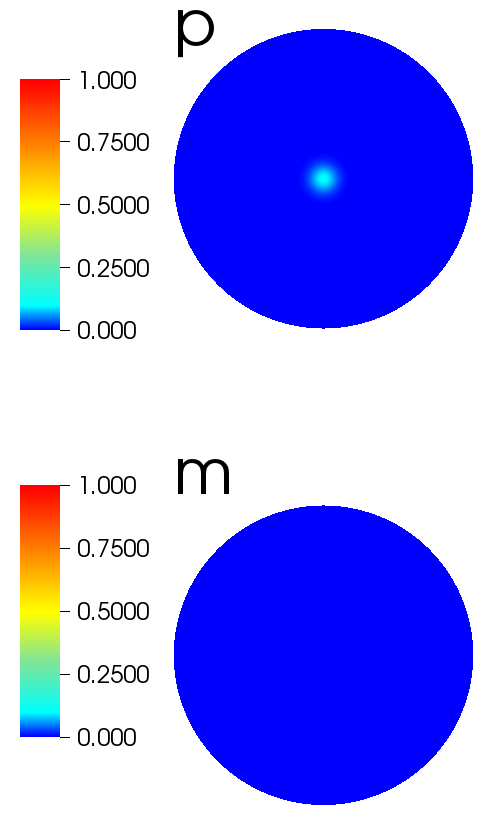}
      \includegraphics[scale=.22]{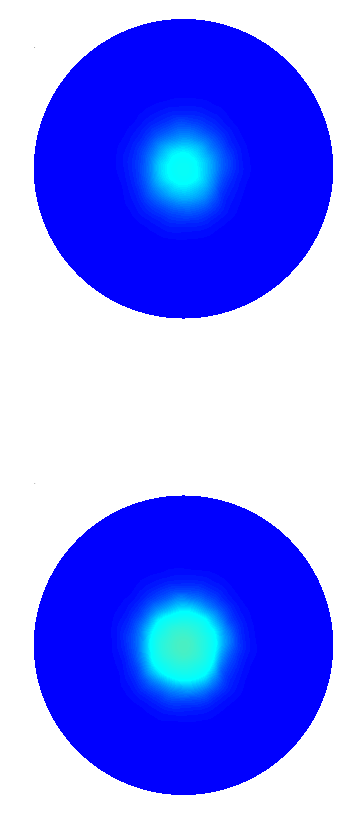}
      \includegraphics[scale=.22]{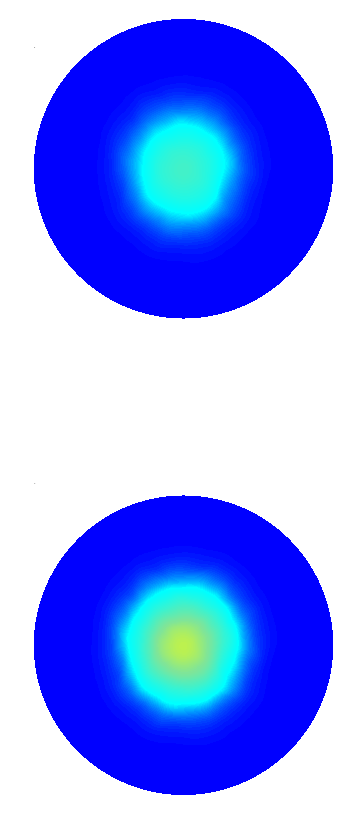}
      \includegraphics[scale=.22]{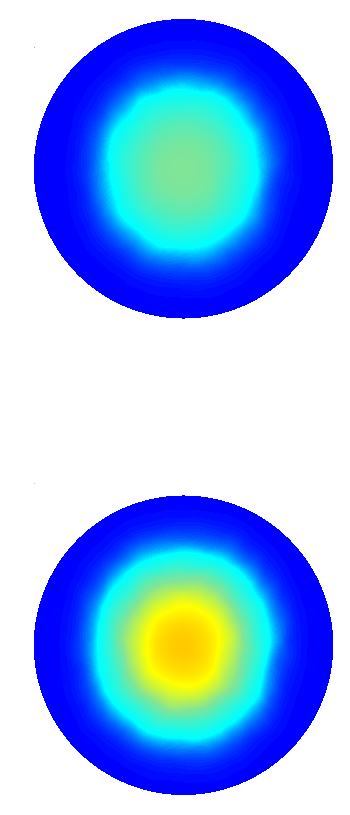}
      \includegraphics[scale=.22]{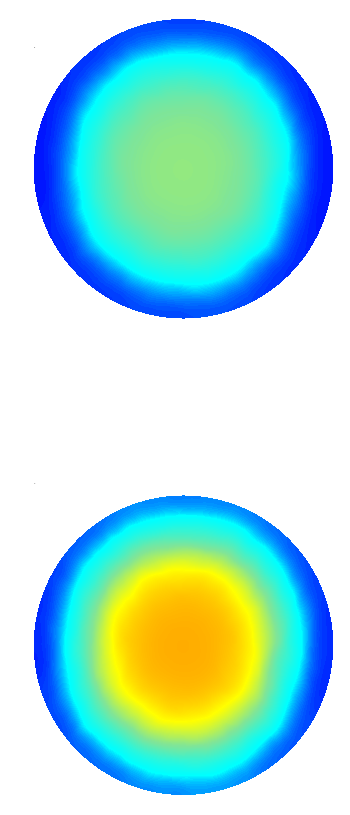}
      \begin{flushleft}
      \hskip6em $ t = 0$  \hskip4em $t = 0.5$ \hskip4em $t = 0.7$ \hskip5em $t = 1$ \hskip4.5em $t = 1.3$
      \end{flushleft}
  \end{center}
  
   \caption{Densities of $p$ (top row) and $m$ (bottom row) for parameters $\alpha =2$, $D_\alpha = 0.0005,\, D_\nu = 0.05$, $q_m=20$, $q_p=10$ and $\mu = 0$ and with the anisotropic equation \eqref{eq:mani} and $D_a$ as in \eqref{eq:ani_ex1}. The initial datum is given by $p(x,0) = 0.1e^{-x^2+y^2}$ and $m(x,0)=0$.}
 \label{fig:2dani}
\end{figure}

\begin{figure}
\begin{center}
      \includegraphics[scale=.22]{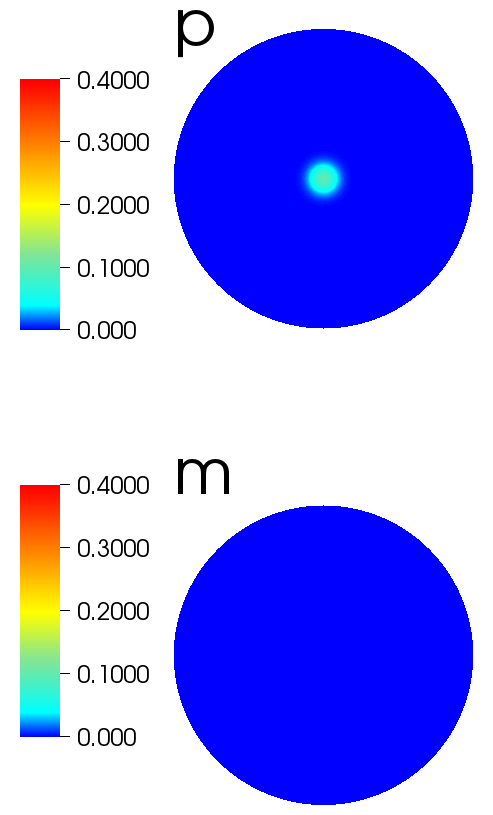}
      \includegraphics[scale=.22]{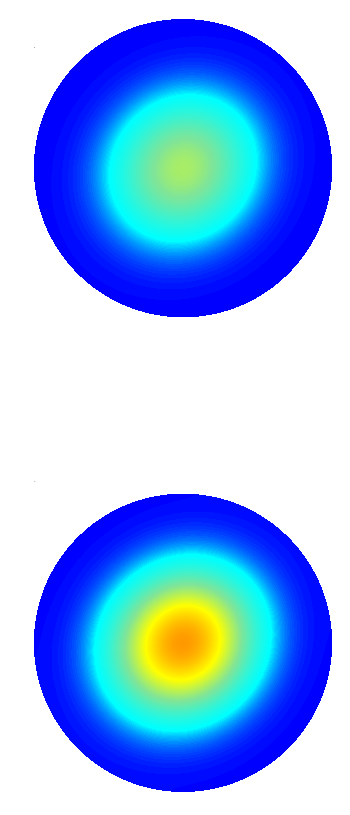}
      \includegraphics[scale=.22]{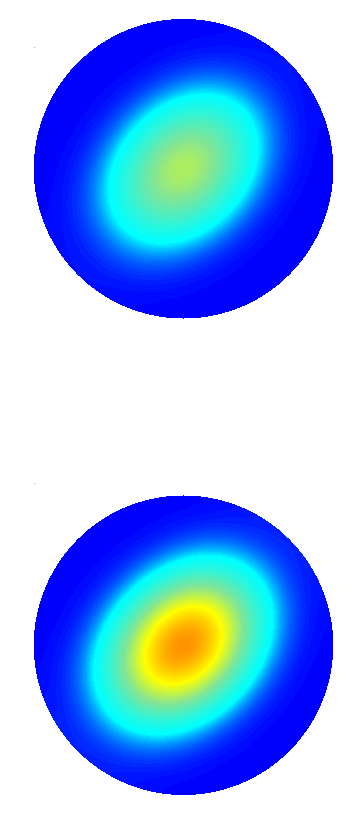}
      \includegraphics[scale=.22]{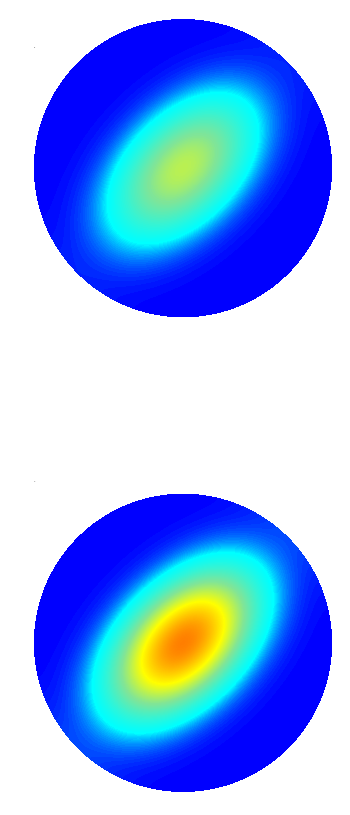}
      \includegraphics[scale=.22]{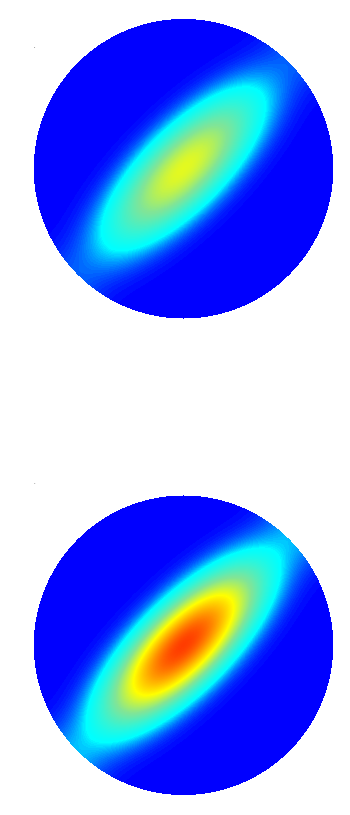}
      \begin{flushleft}
      % Changed \delta_a to (1-\delta_a) w.r.t simulations for nicer look
      \hskip3em initial datum  \hskip3.5em $\delta_a = 0.1$ \hskip3.5em $\delta_a = 0.3$ \hskip3.5em $\delta_a = 0.5$ \hskip4em $\delta_a = 0.8$
      \end{flushleft}
  \end{center}
  
   \caption{Densities of $p$ (top row) and $m$ (bottom row) for parameters $\alpha =2$, $D_\alpha = 0.0005,\, D_\nu = 0.05$, $q_m=20$, $q_p=10$ and $\mu = 0$ and with the anisotropic equation \eqref{eq:mani} and $D_a$ as in \eqref{eq:ani_ex2}. The initial datum is given by $p(x,0) = 0.1e^{-x^2+y^2}$ and $m(x,0)=0$.}
 \label{fig:2dani2}
\end{figure}

\section{Outlook}

There are several interesting questions for further work to be tackled based on our results. From an applied point of view it would be most interesting to incorporate realistic values of reaction rates and anisotropic diffusion locally resolved in neural tissue in order to obtain more quantitative predictions of the growth (cf. \cite{konukoglu2007towards}). Moreover, it would be a natural step to include the modelling of medical therapy such as radio- or chemotherapy to better understand their impact (cf. \cite{konukoglu2010extrapolating}). 

From a mathematical point an analysis of the multi-dimensional version of the reaction-cross-diffusion system would be a key step. We mention that our existence proof is basically independent of the dimension except for passing to the limit as $\tau \rightarrow 0$. In multiple dimensions we cannot expect to obtain a bound on the gradient of $(1-\rho_\tau) \nabla p_\tau$, but only its divergence. Thus, in passing to the limits in the scalar products of gradients we will not be able to employ any kind of compact embedding guaranteeing strong convergence of one of them. However, these terms might be just suitable to pass to the limit using compensated compactness, which we leave to further research. Another interesting question for the analysis is to further quantify the effect of the nonlinear cross-diffusion on the long-term behaviour when compared to the reaction system or Fisher-Kolmogorov.

\section*{Acknowledgements}
MB acknowledges support by the ERC via Grant EU FP 7 - ERC Consolidator Grant 615216 LifeInverse. The work of JFP was supported by DFG via Grant 1073/1-2. The authors thank Prof. Thomas Hillen (Alberta) for useful discussions.

\bibliographystyle{plain}
\bibliography{glio}

\end{document}